\theoremstyle{plain}
\newtheorem{thm}{Theorem}
\newtheorem{prop}{Proposition}
\newtheorem{cor}{Corollary}
\theoremstyle{definition}
\newtheorem{rem}{Remark}
\title{Commutativity and Ideals in Category Crossed Products}
\date{}
\author{Johan \"Oinert and Patrik Lundstr\"om}
\address{Johan \"{O}inert,
Centre for Mathematical Sciences,
Lund University,
P.O. Box 118,
SE-22100 Lund,
Sweden}
\email{Johan.Oinert@math.lth.se}
\address{Patrik Lundstr\"{o}m,
University West,
Department of Engineering Science,
SE-46186 Trollh\"{a}ttan,
Sweden}
\email{Patrik.Lundstrom@hv.se}
\subjclass[2000]{16W50, 16S99}
\begin{document}

\maketitle


\begin{abstract}
In order to simultaneously generalize matrix rings and group graded
crossed products, we introduce
category crossed products. For such algebras we describe
the center and the commutant of the coefficient ring.
We also investigate the connection between on the one hand
maximal commutativity of the coefficient ring
and on the other hand
nonemptyness of intersections of the coefficient ring
by nonzero twosided ideals.
\end{abstract}

\section{Introduction}

Let $R$ be a ring.
By this we always mean that $R$ is an additive group
equipped with a multiplication which is associative and unital.
The identity element of $R$ is denoted $1_R$ and
the set of ring endomorphisms of $R$ is denoted $End(R)$.
We always assume that ring homomorphisms respect the multiplicative identities.
The center of $R$ is denoted $Z(R)$ and by the commutant
of a subset of $R$ we mean the collection of elements
in $R$ that commute with all the elements in the subset.

Suppose that $R_1$ is a subring of $R$ i.e. that there is
an injective ring homomorphism $R_1 \rightarrow R$.
Recall that if $R_1$ is commutative, then it
is called a maximal commutative subring of $R$
if it coincides with its commutant in $R$.
A lot of work has been devoted to investigating the
connection between on the one hand maximal
commutativity of $R_1$ in $R$ and on the other hand
nonemptyness of intersections of $R_1$ with
nonzero twosided ideals of $R$
(see 
\cite{coh},
\cite{fis},
\cite{irv79a},
\cite{irv79b},
\cite{lau06},
\cite{lor79},
\cite{lor80}
and \cite{mon78}).
Recently (see \cite{oin06}, \cite{oin07}, \cite{oin08},
\cite{oin09} and \cite{oin10})
such a connection was established for the commutant $R_1$
of the coefficient ring of crossed products $R$
(see Theorem 1 below).
Recall that crossed products are defined by
first specifying a crossed system i.e. a quadruple
$\{ A,G,\sigma,\alpha \}$ where $A$ is a ring, $G$ is a group (written multiplicatively and with identity element $e$) and
$\sigma : G \rightarrow End(A)$ and $\alpha : G \times G \rightarrow
A$ are maps satisfying the following four conditions:
\begin{equation}\label{id}
\sigma_e = {\rm id}_A
\end{equation}
\begin{equation}\label{identity}
\alpha(s,e) = \alpha(e,s) = 1_A
\end{equation}
\begin{equation}\label{associative}
\alpha(s,t) \alpha(st,r) = \sigma_s(\alpha(t,r)) \alpha(s,tr)
\end{equation}
\begin{equation}\label{algebra}
\sigma_s(\sigma_t(a)) \alpha(s,t) = \alpha(s,t) \sigma_{st}(a)
\end{equation}
for all $s,t,r \in G$ and all $a \in A$.
The crossed product, denoted
$A \rtimes_{\alpha}^{\sigma} G$, associated to this quadruple,
is the collection of formal sums
$\sum_{s \in G} a_s u_s$, where $a_s \in A$, $s \in G$,
are chosen so that all but finitely many of them are nonzero.
By abuse of notation we write $u_s$ instead of $1u_s$
for all $s \in G$.
The addition on $A \rtimes_{\alpha}^{\sigma} G$ is defined pointwise
\begin{equation}\label{addition}
\sum_{s \in G} a_s u_s + \sum_{s \in G} b_s u_s =
\sum_{s \in G} (a_s + b_s)u_s
\end{equation}
and the multiplication on $A \rtimes_{\alpha}^{\sigma} G$ is defined
by the bilinear extension of the relation
\begin{equation}\label{multiplication}
(a_s u_s)(b_t u_t) = a_s \sigma_s(b_t) \alpha(s,t) u_{st}
\end{equation}
for all $s,t \in G$ and all $a_s,b_t \in A$.
By (\ref{id}) and (\ref{identity}) $u_e$ is a multiplicative identity of
$A \rtimes_{\alpha}^{\sigma} G$ and by (\ref{associative})
the multiplication on $A \rtimes_{\alpha}^{\sigma} G$
is associative. There is also an $A$-bimodule structure on $A \rtimes_{\alpha}^{\sigma} G$
defined by the linear extension of the relations
$a(b u_s) = (ab) u_s$ and
$(a u_s)b = (a \sigma_s(b)) u_s$ for
all $a,b \in A$ and all $s,t \in G$,
which, by (\ref{algebra}), makes
$A \rtimes_{\alpha}^{\sigma} G$ an $A$-algebra.
In the article \cite{oin06}, \"{O}inert and Silvestrov
show the following result.

\begin{thm}
If $A \rtimes_{\alpha}^{\sigma} G$ is a crossed product
with $A$ commutative, all $\sigma_s$, $s \in G$,
are ring automorphisms and all $\alpha(s,s^{-1})$, $s \in G$,
are units in $A$, then every
intersection of a nonzero twosided ideal of $A \rtimes_{\alpha}^{\sigma} G$
with the commutant of $A$ in $A \rtimes_{\alpha}^{\sigma} G$ is
nonzero.
\end{thm}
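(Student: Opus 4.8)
The plan is to first pin down the commutant of $A$ inside $R := A \rtimes_{\alpha}^{\sigma} G$ explicitly, and then to run a minimal-support argument inside a given ideal. For the first part I would take $x = \sum_{s \in G} a_s u_s \in R$ and $b \in A$ and compute, using the $A$-bimodule structure together with the commutativity of $A$, that
\[
bx - xb \;=\; \sum_{s \in G} \bigl( b a_s - a_s \sigma_s(b) \bigr) u_s \;=\; \sum_{s \in G} a_s \bigl( b - \sigma_s(b) \bigr) u_s .
\]
Reading this off, $x$ lies in the commutant of $A$ in $R$ if and only if $a_s\bigl(b - \sigma_s(b)\bigr) = 0$ for every $s \in G$ and every $b \in A$; in particular, by \eqref{id} there is no constraint on the coefficient $a_e$.

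Next I would record that, for each $t \in G$, left multiplication by $u_{t^{-1}}$ is an injective map $R \to R$. Indeed, \eqref{id}, \eqref{identity} and \eqref{multiplication} give $\bigl( \alpha(t,t^{-1})^{-1} u_t \bigr) u_{t^{-1}} = u_e = 1_R$, which is legitimate precisely because $\alpha(t,t^{-1})$ is a unit of $A$ by hypothesis; thus $u_{t^{-1}}$ has a left inverse. (More generally, combining this with the assumption that each $\sigma_s$ is an automorphism, every $u_s$ turns out to be a unit of $R$, which makes the bookkeeping below transparent.) Consequently, for any nonzero $y = \sum_s a_s u_s$ one has $u_{t^{-1}} y = \sum_{s} \sigma_{t^{-1}}(a_s)\alpha(t^{-1},s)\,u_{t^{-1}s} \ne 0$, its support is contained in $t^{-1}\cdot\mathrm{supp}(y)$, and whenever $t \in \mathrm{supp}(y)$ the $u_e$-component of $u_{t^{-1}} y$ equals $u_{t^{-1}}(a_t u_t)$, which is nonzero (again by injectivity) since $a_t \ne 0$.

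Now for the main argument. Given a nonzero twosided ideal $I$ of $R$, I would choose $y \in I \setminus \{0\}$ whose support has minimal cardinality $n$ (possible since all supports are finite), pick $t \in \mathrm{supp}(y)$, and set $x := u_{t^{-1}} y$. Then $x \in I$ because $I$ is a left ideal, $x \ne 0$ and $|\mathrm{supp}(x)| \le n$ by the previous paragraph, so minimality forces $|\mathrm{supp}(x)| = n$, while $e \in \mathrm{supp}(x)$. Writing $x = \sum_s c_s u_s$ with $c_e \ne 0$, I would then invoke the opening computation: for every $b \in A$ the element $bx - xb$ lies in $I$, equals $\sum_s c_s\bigl(b - \sigma_s(b)\bigr) u_s$, and has vanishing $u_e$-coefficient by \eqref{id}; hence its support is contained in $\mathrm{supp}(x)\setminus\{e\}$ and has cardinality strictly less than $n$. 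Minimality of $n$ then forces $bx - xb = 0$ for all $b \in A$, which by the characterization of the commutant means exactly that $x$ lies in the commutant of $A$ in $R$. Since $0 \ne x \in I$, the intersection of $I$ with the commutant of $A$ is nonzero.

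I expect the one genuinely delicate point to be the support bookkeeping in the middle step: one must be sure that passing from $y$ to the shifted element $u_{t^{-1}} y$ neither annihilates it nor shrinks its support — so that $x$ still has minimal support — while forcing the diagonal component $A u_e$ into the support. This is exactly where the hypothesis that each $\alpha(s,s^{-1})$ is a unit is used (to produce a one-sided inverse of $u_{t^{-1}}$), whereas the commutativity of $A$ enters only in the identification of the commutant.
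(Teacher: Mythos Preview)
Your proof is correct and follows essentially the same minimal-support argument as the paper's proof of its groupoid generalization: shift an element of least support in $I$ so that $e$ lies in its support, then observe that commutators with elements of $A$ drop the support strictly below the minimum, forcing them to vanish. The only differences are cosmetic---you argue directly rather than by contraposition, minimize over $I\setminus\{0\}$ rather than over $I\setminus C$, and shift by $u_{t^{-1}}$ on the left rather than on the right.
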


In loc. cit. \"{O}inert and Silvestrov
determine the center of crossed products and
in particular when crossed products are commutative;
they also give a description of the commutant of $A$
in $A \rtimes_{\alpha}^{\sigma} G$.
Theorem 1 has been generalized somewhat by relaxing
the conditions on $\sigma$ and $\alpha$
(see
\cite{oin08} and \cite{oin09})
and by considering general strongly group graded rings
(see \cite{oin10}).
For more details concerning group graded rings in general
and crossed product algebras in particular, see e.g. \cite{CaenOyst}, \cite{Karp} and \cite{nas}.

Many natural examples of rings, such as rings of
matrices, crossed product algebras defined by separable extensions
and category rings,
are not in any natural way graded by groups, but instead by
categories (see \cite{lu05}, \cite{lu06}, \cite{lu07} and
Remark \ref{firstremark}).
The purpose of this article is to
define a category graded generalization of
crossed products
and to analyze commutativity questions similar
to the ones discussed above for such algebras.
In particular, we wish to generalize Theorem 1
from groups to groupoids (see Theorem \ref{intersection} in Section 4).
To be more precise, suppose that $G$ is a category.
The family of objects of $G$ is denoted $ob(G)$;
we will often identify an object in $G$ with
its associated identity morphism.
The family of morphisms in $G$ is denoted $ob(G)$;
by abuse of notation, we will often write $s \in G$
when we mean $s \in mor(G)$.
The domain and codomain of a morphism $s$ in $G$ is denoted
$d(s)$ and $c(s)$ respectively.
We let $G^{(2)}$ denote the collection of composable
pairs of morphisms in $G$ i.e. all $(s,t)$ in
$mor(G) \times mor(G)$ satisfying $d(s)=c(t)$.
Analogously, we let $G^{(3)}$ denote the collection
of all composable triples of morphisms in $G$ i.e. all
$(s,t,r)$ in $mor(G) \times mor(G) \times mor(G)$
satisfying $(s,t) \in G^{(2)}$ and $(t,r) \in G^{(2)}$.
Throughout the article $G$ is assumed to be small i.e.
with the property that $mor(G)$ is a set.
By a crossed system we mean a
quadruple $\{ A,G,\sigma,\alpha \}$ where
$A$ is the direct sum of rings $A_e$, $e \in ob(G)$,
$\sigma_s : A_{d(s)} \rightarrow A_{c(s)}$, $s \in G$,
are ring homomorphisms and
$\alpha$ is a map from $G^{(2)}$ to the
disjoint union of the sets $A_e$, $e \in ob(G)$,
with $\alpha(s,t) \in A_{c(s)}$,
$(s,t) \in G^{(2)}$, satisfying the following five conditions:
\begin{equation}\label{idd}
\sigma_e = {\rm id}_{A_e}
\end{equation}
\begin{equation}\label{identityr}
\alpha(s,d(s)) = 1_{A_{c(s)}}
\end{equation}
\begin{equation}\label{identityl}
\alpha(c(t),t) = 1_{A_{c(t)}}
\end{equation}
\begin{equation}\label{associativee}
\alpha(s,t) \alpha(st,r) = \sigma_s(\alpha(t,r)) \alpha(s,tr)
\end{equation}
\begin{equation}\label{algebraa}
\sigma_s(\sigma_t(a)) \alpha(s,t) = \alpha(s,t) \sigma_{st}(a)
\end{equation}
for all $e \in ob(G)$, all $(s,t,r) \in G^{(3)}$ and all $a \in A_{d(t)}$.
Let $A \rtimes_{\alpha}^{\sigma} G$ denote the collection of formal sums
$\sum_{s \in G} a_s u_s$, where $a_s \in A_{c(s)}$, $s \in G$,
are chosen so that all but finitely many of them are nonzero.
Define addition on $A \rtimes_{\alpha}^{\sigma} G$
by (\ref{addition}) and define multiplication on $A \rtimes_{\alpha}^{\sigma} G$
by (\ref{multiplication})
if $(s,t) \in G^{(2)}$ and $(a_s u_s)(b_t u_t) = 0$ otherwise
where $a_s \in A_{c(s)}$ and $b_t \in A_{c(t)}$.
By (\ref{idd}), (\ref{identityr}) and (\ref{identityl})
it follows that $A \rtimes_{\alpha}^{\sigma} G$
has a multiplicative identity if and only if $ob(G)$
is finite;
in that case the multiplicative identity is
$\sum_{e \in ob(G)} u_e$.
By (\ref{associativee}) the multiplication on
$A \rtimes_{\alpha}^{\sigma} G$ is associative.
Define a left $A$-module structure on
$A \rtimes_{\alpha}^{\sigma} G$ by the bilinear extension
of the rule
$a_e (b_s u_s) = (a_s b_s) u_s$
if $e = c(s)$ and
$a_e (b_s u_s) = 0$ otherwise
for all $a_e \in A_e$, $b_s \in A_{c(s)}$,
$e \in ob(G)$, $s \in G$.
Analogously, define a right $A$-module structure on
$A \rtimes_{\alpha}^{\sigma} G$ by the bilinear extension
of the rule
$(b_s u_s) c_f = (b_s \sigma_s(c_f))u_s$
if $f = d(s)$ and
$(b_s u_s) c_f = 0$ otherwise
for all $b_s \in A_{c(s)}$, $c_f \in A_f$,
$f \in ob(G)$, $s \in G$.
By (\ref{algebraa}) this $A$-bimodule structure
makes $A \rtimes_{\alpha}^{\sigma} G$ an
$A$-algebra. We will often identify $A$ with
$\oplus_{e \in ob(G)} A_e u_e$; this ring will be referred
to as the coefficient ring of $A \rtimes_{\alpha}^{\sigma} G$.
It is clear that $A \rtimes_{\alpha}^{\sigma} G$ is a category graded ring
in the sense defined in \cite{lu06} and it is strongly graded
if and only if each $\alpha(s,t)$, $(s,t) \in G^{(2)}$,
has a left inverse in $A_{c(s)}$.
We call $A \rtimes_{\alpha}^{\sigma} G$ the category
crossed algebra associated to the crossed system
$\{ A,G,\sigma,\alpha \}$.

In Section 2, we determine the center of category crossed
products. In particular, we determine when category
crossed products are commutative.
In Section 3, we describe the commutant of the
coefficient ring in category crossed products.
In Section 4, we investigate the connection between on the one hand
maximal commutativity of the coefficient ring
and on the other hand
nonemptyness of intersections of the coefficient ring
by nonzero twosided ideals.
In the end of each section, we indicate how our results
generalize earlier results
for other algebraic structures such as group crossed products and matrix rings
(see Remarks 1-6 and Remark 8).

\section{The Center}

For the rest of the article, unless otherwise stated,
we suppose that $A \rtimes_{\alpha}^{\sigma} G$ is a category
crossed product.
We say that $\alpha$ is symmetric if
$\alpha(s,t) = \alpha(t,s)$ for all $s,t \in G$
with $d(s)=c(s)=d(t)=c(t)$.
We say that $A \rtimes_{\alpha}^{\sigma} G$
is a monoid (groupoid, group) crossed product if $G$
is a monoid (groupoid, group).
We say that $A \rtimes_{\alpha}^{\sigma} G$ is a
twisted category (monoid, groupoid, group) algebra
if each $\sigma_s$, $s \in G$, with $d(s)=c(s)$
equals the identity map on $A_{d(s)}=A_{c(s)}$;
in that case the category (monoid, groupoid, group) crossed product is
denoted $A \rtimes_{\alpha} G$.
We say that $A \rtimes_{\alpha}^{\sigma} G$
is a skew category (monoid, groupoid, group) algebra
if $\alpha(s,t) = 1_{A_{c(s)}}$, $(s,t) \in G^{(2)}$;
in that case the category (monoid, groupoid, group) crossed product is
denoted $A \rtimes^{\sigma} G$.
If $G$ is a monoid, then we let $A^G$ denote
the set of elements in $A$ fixed by all $\sigma_s$, $s \in G$.
We say that $G$ is cancellable if any equality
of the form $s_1 t_1 = s_2 t_2$,
$(s_i,t_i) \in G^{(2)}$, $i=1,2$,
implies that $s_1 = s_2$ (or $t_1 = t_2$)
whenever $t_1 = t_2$ (or $s_1 = s_2$).
For $e,f \in ob(G)$ we let $G_{f,e}$ denote the
collection of $s \in G$ with $c(s) = f$ and
$d(s) = e$; we let $G_e$ denote the monoid
$G_{e,e}$. We let the restriction of
$\alpha$ (or $\sigma$) to $G_e^2$
(or $G_e$) be denoted by $\alpha_e$
(or $\sigma_e$). With this notation all
$A_e \rtimes_{\alpha_e}^{\sigma_e} G_e$, $e \in ob(G)$,
are monoid crossed products.

\begin{prop}\label{centermonoid}
The center of a monoid crossed product
$A \rtimes_{\alpha}^{\sigma} G$ is the collection of
$\sum_{s \in G} a_s u_s$ in $A \rtimes_{\alpha}^{\sigma} G$
satisfying the following two conditions:
(i) $a_s \sigma_s(a) = a a_s$, $s \in G$, $a \in A$;
(ii) for all $t,r \in G$ the following equality holds
$\sum_{s \in G \atop st=r} a_s \alpha(s,t) =
\sum_{s \in G \atop ts=r} \sigma_t(a_s) \alpha(t,s)$.
\end{prop}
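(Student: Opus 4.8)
The plan is to characterize membership in the center by testing an arbitrary element $x = \sum_{s \in G} a_s u_s$ against two generating families of elements: the coefficients $a \in A$ (identified with $a u_e$), and the basis elements $u_t$, $t \in G$. First I would write down what $x$ being central means: $xy = yx$ for all $y$ in the monoid crossed product. Since $G$ is a monoid, $G$ has a single object $e$, $A = A_e$ is an honest ring, $u_e$ is the identity, and every pair of morphisms is composable, so the multiplication rule \eqref{multiplication} always applies. By bilinearity it suffices to check $x(a u_e) = (a u_e) x$ for all $a \in A$ and $x u_t = u_t x$ for all $t \in G$.

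Next I would carry out the two computations. For the first, using the bimodule relations (or \eqref{multiplication} with $s$ or $t$ equal to $e$ together with \eqref{identityr}, \eqref{identityl}), one gets $x(a u_e) = \sum_{s} a_s \sigma_s(a) u_s$ and $(a u_e) x = \sum_s a a_s u_s$; comparing coefficients of $u_s$ for each fixed $s$ yields condition (i). For the second, expand $x u_t = \sum_s (a_s u_s)(u_t) = \sum_s a_s \alpha(s,t) u_{st}$ and $u_t x = \sum_s \sigma_t(a_s) \alpha(t,s) u_{ts}$; now collect, on each side, the coefficient of $u_r$ for a fixed $r \in G$. On the left this is $\sum_{s : st = r} a_s \alpha(s,t)$ and on the right $\sum_{s : ts = r} \sigma_t(a_s) \alpha(t,s)$, which is exactly condition (ii). Note both sums are finite because only finitely many $a_s$ are nonzero.

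Then I would argue the converse: if (i) and (ii) hold, running the above computations backwards shows $x$ commutes with every $a u_e$ and every $u_t$, and since these elements generate $A \rtimes_{\alpha}^{\sigma} G$ as a ring (every $a_s u_s$ equals $(a_s u_e)(u_s)$, using \eqref{identityr}), $x$ commutes with everything, so $x \in Z(A \rtimes_{\alpha}^{\sigma} G)$. One small point to make explicit is that $x$ central is equivalent to $x$ commuting with this generating set; this is immediate since the centralizer of a set is a subring.

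The main obstacle — really the only thing requiring care — is the bookkeeping in the second computation: making sure the reindexing of the double sum into "coefficient of $u_r$" is done correctly on both sides, and that one does not secretly assume $G$ is cancellative (several $s$ may contribute to the same $r$, which is why the statement has sums rather than single terms). Everything else is a routine unwinding of the definitions \eqref{multiplication}, \eqref{identityr}, \eqref{identityl}, together with bilinearity; no use of \eqref{associativee} or \eqref{algebraa} is needed beyond what already guarantees that $A \rtimes_{\alpha}^{\sigma} G$ is a well-defined associative algebra.
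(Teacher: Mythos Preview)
Your proposal is correct and follows essentially the same approach as the paper: test centrality of $x$ against the elements $a u_e$ (yielding condition (i)) and $u_t$ (yielding condition (ii)), then observe that these generate the ring for the converse. Your write-up is in fact more detailed than the paper's own proof, which simply asserts that (i) follows from $x\,a u_e = a u_e\, x$ and (ii) from $x u_t = u_t x$, and that the converse is clear.
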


\begin{proof}
Let $e$ denote the identity element of $G$.
Take $x := \sum_{s \in G} a_s u_s$
in the center of $A \rtimes_{\alpha}^{\sigma} G$.
Condition (i) follows from the fact that
$x au_e = au_e x$ for all $a \in A$.
Condition (ii) follows from the fact that
$x u_t = u_t x$ for all $t \in G$. Conversely, it is clear
that conditions (i) and (ii) are sufficient for
$x$ to be in the center of $A \rtimes_{\alpha}^{\sigma} G$.
\end{proof}

\begin{cor}\label{centertwisted}
The center of a twisted monoid ring
$A \rtimes_{\alpha} G$ is the collection of
$\sum_{s \in G} a_s u_s$ in $A \rtimes_{\alpha} G$
satisfying the following two conditions:
(i) $a_s \in Z(A)$, $s \in G$;
(ii) for all $t,r \in G$, the following equality holds
$\sum_{s \in G \atop st=r} a_s \alpha(s,t) =
\sum_{s \in G \atop ts=r} a_s \alpha(t,s)$.
\end{cor}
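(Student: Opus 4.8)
The plan is to obtain this as an immediate specialization of Proposition~\ref{centermonoid}. First I would observe that when $G$ is a monoid it has exactly one object, say $e$, so that $d(s) = c(s) = e$ for every $s \in G$; by the definition of a twisted monoid algebra this forces $\sigma_s = {\rm id}_A$ for all $s \in G$. Consequently Proposition~\ref{centermonoid} applies directly to $A \rtimes_{\alpha} G = A \rtimes_{\alpha}^{\sigma} G$, and it only remains to rewrite its two conditions under the hypothesis $\sigma_s = {\rm id}_A$.

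For condition (i): the requirement $a_s \sigma_s(a) = a a_s$ for all $s \in G$ and all $a \in A$ becomes $a_s a = a a_s$ for all $a \in A$, which is precisely the statement that $a_s \in Z(A)$. For condition (ii): the equality $\sum_{s \in G,\ st=r} a_s \alpha(s,t) = \sum_{s \in G,\ ts=r} \sigma_t(a_s) \alpha(t,s)$ becomes $\sum_{s \in G,\ st=r} a_s \alpha(s,t) = \sum_{s \in G,\ ts=r} a_s \alpha(t,s)$, since $\sigma_t$ is the identity on $A$. These are exactly conditions (i) and (ii) in the statement of the corollary, so the two descriptions of the center coincide.

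There is essentially no obstacle here beyond making the identifications explicit. The only point that deserves a sentence of care is to confirm that ``twisted monoid ring'' in the paper's conventions really does force \emph{every} $\sigma_s$ to be the identity: this is the case precisely because in the monoid situation the condition $d(s) = c(s)$ singling out the maps $\sigma_s$ required to be trivial is satisfied by all of $mor(G)$. Once that is noted, the corollary follows by direct substitution into Proposition~\ref{centermonoid}.
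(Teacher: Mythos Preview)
Your proposal is correct and follows exactly the same approach as the paper, which simply states that the corollary follows immediately from Proposition~\ref{centermonoid}. Your added care in verifying that the twisted monoid hypothesis forces every $\sigma_s$ to be the identity, and then substituting into conditions (i) and (ii), is precisely the unpacking of that one-line deduction.
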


\begin{proof}
This follows immediately from
Proposition \ref{centermonoid}.
\end{proof}

\begin{cor}\label{centerskew}
If $G$ is an abelian cancellable monoid,
$\alpha$ is symmetric and has the
property that none of the $\alpha(s,t)$, $(s,t) \in G^{(2)}$,
is a zerodivisor, then
the center of $A \rtimes_{\alpha}^{\sigma} G$ is the collection of
$\sum_{s \in G} a_s u_s$ in $A \rtimes_{\alpha}^{\sigma} G$
satisfying the following two conditions:
(i) $a_s \sigma_s(a) = a a_s$, $s \in G$, $a \in A$;
(ii) $a_s \in A^G$, $s \in G$.
In particular, if $A \rtimes^{\sigma} G$ is a
skew monoid ring where $G$ is abelian and cancellable,
then the same description of the center is valid.
\end{cor}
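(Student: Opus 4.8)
The plan is to deduce this from Proposition \ref{centermonoid} by showing that, under the stated hypotheses, condition (ii) of that proposition is equivalent to the requirement that $a_s \in A^G$ for every $s \in G$; condition (i) is literally the same in both statements, and the final assertion about skew monoid rings is the special case $\alpha \equiv 1$, for which symmetry is automatic and $1_A$ is never a zerodivisor.

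First I would use commutativity and cancellability to collapse the sums appearing in Proposition \ref{centermonoid}(ii) to single terms. Fix $t,r \in G$. Since $G$ is abelian, $st = ts$ for every $s$, so the index sets $\{ s \in G : st = r \}$ and $\{ s \in G : ts = r \}$ coincide; and since $G$ is cancellable, $st = s't = r$ forces $s = s'$, so this common index set has at most one element. Hence, whenever there is an $s$ with $st = r$, the equality in Proposition \ref{centermonoid}(ii) reads simply $a_s\,\alpha(s,t) = \sigma_t(a_s)\,\alpha(t,s)$, and it is vacuous otherwise.

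Next, for the substantive direction, I would fix arbitrary $s_0,t \in G$ and apply the collapsed identity with $r := s_0 t$; cancellability makes $s_0$ the unique solution of $st = r$, so the identity becomes $a_{s_0}\,\alpha(s_0,t) = \sigma_t(a_{s_0})\,\alpha(t,s_0)$. Using that $\alpha$ is symmetric (in a monoid every morphism is an endomorphism of the single object, so symmetry means $\alpha(s_0,t) = \alpha(t,s_0)$), this rearranges to $\bigl(a_{s_0} - \sigma_t(a_{s_0})\bigr)\alpha(s_0,t) = 0$, and since $\alpha(s_0,t)$ is not a zerodivisor we obtain $a_{s_0} = \sigma_t(a_{s_0})$; as $t$ was arbitrary, $a_{s_0} \in A^G$. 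Conversely, if every $a_s \in A^G$, then $\sigma_t(a_s) = a_s$, and combined with $ts = st$ and $\alpha(t,s) = \alpha(s,t)$ the right-hand side of Proposition \ref{centermonoid}(ii) reduces term by term to its left-hand side, so (ii) holds. Together with Proposition \ref{centermonoid} this gives the stated description, and the skew monoid ring case follows at once.

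I expect the only point requiring care to be the bookkeeping of the index sets — confirming that "at most one $s$" is applied correctly and that the empty-index case is dispatched — but no real obstacle arises, since the non-zerodivisor hypothesis does exactly the work of cancelling the factor $\alpha(s_0,t)$.
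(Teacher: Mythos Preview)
Your proposal is correct and follows essentially the same route as the paper: reduce to Proposition~\ref{centermonoid}, use abelianness and cancellability to collapse the sums in condition~(ii) to the single equation $a_s\,\alpha(s,t)=\sigma_t(a_s)\,\alpha(t,s)$, then invoke symmetry of $\alpha$ and the non-zerodivisor hypothesis to conclude $a_s=\sigma_t(a_s)$. You are slightly more explicit than the paper about the index-set bookkeeping and the converse direction, but the argument is the same.
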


\begin{proof}
Take $x := \sum_{s \in G} a_s u_s$ in
$A \rtimes_{\alpha}^{\sigma} G$.
Suppose that $x$ belongs to the center of $A \rtimes_{\alpha}^{\sigma} G$.
Condition (i) follows from the first
part of Proposition \ref{centermonoid}.
Now we show condition (ii).
Take $s,t \in G$ and let $r:=st$.
Since $G$ is commutative and cancellable, we get,
by the second part of Proposition \ref{centermonoid}, that
$a_s \alpha(s,t) = \sigma_t(a_s) \alpha(t,s)$.
Since $\alpha$ is symmetric and $\alpha(s,t)$
is not a zerodivisor, this implies that
$a_s = \sigma_t(a_s)$.
Since $s$ and $t$ were arbitrarily chosen from $G$, this
implies that $a_s \in A^G$, $s \in G$.
On the other hand, by Proposition \ref{centermonoid},
it is clear that (i) and (ii) are sufficient
conditions for $x$ to be in the center of $A \rtimes_{\alpha}^{\sigma} G$.
The second part of the claim is obvious.
\end{proof}

Now we show that the center of a category crossed
product is a particular subring of the direct sum of the
centers of the corresponding monoid crossed products.

\begin{prop}\label{centergeneral}
The center of a category crossed product
$A \rtimes_{\alpha}^{\sigma} G$ equals the collection of
$\sum_{e \in ob(G)} \sum_{s \in G_e} a_s u_s$ in
$\sum_{e \in ob(G)} Z(A_e \rtimes_{\alpha_e}^{\sigma_e} G_e)$
satisfying
$\sum_{s \in G_e \atop rs=g} \sigma_r(a_s) \alpha(r,s) =
\sum_{t \in G_f \atop tr=g} a_t \alpha(t,r)$
for all $e,f \in ob(G)$, $e \neq f$, and
all $r,g \in G_{f,e}$.
\end{prop}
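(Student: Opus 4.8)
The plan is to characterize centrality of an element $x = \sum_{s \in G} a_s u_s$ (with $a_s \in A_{c(s)}$) by testing it against two families of generators of $A \rtimes_{\alpha}^{\sigma} G$: the ``coefficient'' elements $a u_e$ with $e \in ob(G)$, $a \in A_e$, and the ``basis'' elements $u_r$ with $r \in G$. Since $A \rtimes_{\alpha}^{\sigma} G$ is generated as a ring by these elements (every $a_s u_s = a_s u_{c(s)} \cdot u_s$), $x$ is central if and only if it commutes with all of them.

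First I would show that $x$ lies in $\sum_{e} (A_e \rtimes_{\alpha_e}^{\sigma_e} G_e)$, i.e. that $a_s = 0$ unless $d(s) = c(s)$. Commuting $x$ with $u_e$ for $e \in ob(G)$ forces, by comparing the $A_{c(s)}$-components using the module rules $u_e(a_s u_s) = a_s u_s$ if $e = c(s)$ (else $0$) and $(a_s u_s) u_e = \sigma_s(1)\cdots$ supported only when $e = d(s)$, that $a_s u_s = u_{c(s)} (a_s u_s) = (a_s u_s) u_{c(s)}$, and the latter is nonzero only if $c(s) = d(s)$. So each surviving $a_s$ has $s \in G_e$ for $e = d(s) = c(s)$. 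Next, for each fixed $e$, commuting with $a u_e$ ($a \in A_e$) and with $u_t$ for $t \in G_e$ reduces — exactly as in Proposition \ref{centermonoid} applied to the monoid $G_e$ — to the statement that the $e$-component $\sum_{s \in G_e} a_s u_s$ lies in $Z(A_e \rtimes_{\alpha_e}^{\sigma_e} G_e)$. This handles all the ``diagonal'' test elements.

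The remaining content is commutation with $u_r$ for a genuinely non-endomorphism $r \in G_{f,e}$ with $e \neq f$. Here I would compute $x u_r$ and $u_r x$ directly from the multiplication rule \eqref{multiplication}: $x u_r = \sum_{s} a_s \sigma_s(1)\alpha(s,r) u_{sr}$ where the sum is over $s$ with $(s,r) \in G^{(2)}$, i.e. $d(s) = c(r) = f$, so $s \in G_f$; and since $\alpha(t, d(t)) = 1$ via \eqref{identityr} the terms are $\sum_{t \in G_f} a_t \alpha(t,r) u_{tr}$. Similarly $u_r x = \sum_{s \in G_e} \sigma_r(a_s)\alpha(r,s) u_{rs}$, using $\alpha(c(t),t)=1$ from \eqref{identityl}. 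Each product $tr$ (resp.\ $rs$) lies in $G_{f,e}$, and composability on the other side forces the summation ranges to be $t \in G_f$ and $s \in G_e$ as written. Collecting the coefficient of a fixed $g \in G_{f,e}$ on both sides yields precisely the stated identity
$$\sum_{s \in G_e,\ rs = g} \sigma_r(a_s)\alpha(r,s) = \sum_{t \in G_f,\ tr = g} a_t \alpha(t,r).$$
Conversely, one checks that the diagonal conditions ($x \in \sum_e Z(A_e \rtimes_{\alpha_e}^{\sigma_e} G_e)$) together with all these off-diagonal identities suffice for $x$ to commute with every generator, hence to be central; the diagonal generators only see the $e$-components by the support argument above, and the $u_r$ with $d(r) = c(r)$ are covered by the monoid case while the $u_r$ with $d(r) \neq c(r)$ give exactly the off-diagonal identities.

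The main obstacle — really the only delicate point — is bookkeeping the supports: one must be careful that when multiplying $a_s u_s$ on the left or right by $u_r$, the result vanishes unless the relevant domains/codomains match, so that after commuting with $u_r$ only the components $\sum_{s \in G_f} a_s u_s$ and $\sum_{s \in G_e} a_s u_s$ interact, and that $a_g u_g$ itself (with $g \in G_{f,e}$, $e\neq f$) contributes nothing to either $xu_r$ or $u_rx$ for $r\in G_{f,e}$ unless one of $gr$, $rg$ is defined — which it generally is not. Organizing this so that the off-diagonal relation is indexed cleanly by $g \in G_{f,e}$ (and noting the diagonal components $a_s u_s$, $s \in G_e \cup G_f$, are the only ones that can appear) is the crux; once the indexing is set up correctly the computation is a routine application of \eqref{multiplication}, \eqref{identityr} and \eqref{identityl}.
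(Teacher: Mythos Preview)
Your proposal is correct and follows essentially the same route as the paper's proof: commute $x$ with the idempotents $u_e$ to force support in $\bigcup_e G_e$, invoke Proposition~\ref{centermonoid} on each monoid piece, and then commute with $u_r$ for $r \in G_{f,e}$, $e \neq f$, to read off the off-diagonal identity by comparing coefficients of $u_g$. One trivial quibble: the simplifications you attribute to \eqref{identityr} and \eqref{identityl} in the computation of $xu_r$ and $u_rx$ actually come from $\sigma_s(1_{A_{d(s)}}) = 1_{A_{c(s)}}$ (ring homomorphism) and left multiplication by $1_{A_{c(r)}}$, respectively --- but this does not affect the argument.
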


\begin{proof}
Take $x := \sum_{s \in G} a_s u_s$ in the center of
$A \rtimes_{\alpha}^{\sigma} G$. By the equalities
$u_e x = x u_e$, $e \in ob(G)$, it follows that $a_s = 0$
for all $s \in G$ with $d(s) \neq c(s)$.
Therefore we can write
$x = \sum_{e \in ob(G)} \sum_{s \in G_e} a_s u_s$
where $\sum_{s \in G_e} a_s u_s \in
Z(A_e \rtimes_{\alpha_e}^{\sigma_e} G_e)$, $e \in ob(G)$.
The last part of the claim follows from the fact that the equality
$u_r \left( \sum_{s \in G_e} a_s u_s \right) =
\left( \sum_{s \in G_e} a_s u_s \right) u_r$
holds for all $e,f \in ob(G)$, $e \neq f$, and all
$r \in G_{f,e}$.
\end{proof}

\begin{prop}\label{commutative}
Suppose that $A \rtimes_{\alpha}^{\sigma} G$ is a category
crossed product and consider the following five conditions:
(0) all $\alpha(s,t)$, $(s,t) \in G^{(2)}$, are nonzero;
(i) $A \rtimes_{\alpha}^{\sigma} G$ is commutative;
(ii) $G$ is the disjoint union of the monoids $G_e$, $e \in ob(G)$,
and they are all abelian;
(iii) each $A_e \rtimes_{\alpha_e}^{\sigma_e} G_e$, $e \in ob(G)$,
is a twisted monoid algebra;
(iv) $A$ is commutative;
(v) $\alpha$ is symmetric. Then
(a) Conditions (0) and (i) imply conditions (ii)-(v);
(b) Conditions (ii)-(v) imply condition (i).
\end{prop}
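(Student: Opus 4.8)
The plan is to verify both implications by evaluating products of the generating elements $a u_s$, $a\in A_{c(s)}$, $s\in G$, in both orders and then comparing coefficients of the resulting formal sums; since the multiplication of $A\rtimes_\alpha^\sigma G$ is the bilinear extension of the rule on such generators, commutativity of $A\rtimes_\alpha^\sigma G$ is equivalent to the identity $(a_s u_s)(b_t u_t)=(b_t u_t)(a_s u_s)$ holding for all $s,t\in G$ and all $a_s\in A_{c(s)}$, $b_t\in A_{c(t)}$.

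For part (a), assume (0) and (i). Condition (iv) is immediate, since $A$ is a subring of the commutative ring $A\rtimes_\alpha^\sigma G$. For (iii), fix $e\in ob(G)$, $s\in G_e$ and $a\in A_e$ and compute $(au_e)u_s$ and $u_s(au_e)$; by (\ref{idd}), (\ref{identityr}) and (\ref{identityl}) these collapse to $au_s$ and $\sigma_s(a)u_s$ respectively, so commutativity forces $a=\sigma_s(a)$, whence $\sigma_s=\mathrm{id}_{A_e}$, i.e. each $A_e\rtimes_{\alpha_e}^{\sigma_e}G_e$ is a twisted monoid algebra. For (ii) I would first observe that (0) forces every $A_e$ to be a nonzero ring: by (\ref{identityr}) applied to the identity morphism $e$ one has $\alpha(e,e)=1_{A_e}$, which is nonzero by (0). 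Then, if some $s\in G$ had $d(s)=e\neq f=c(s)$, the pair $(s,f)$ would not be composable, so $u_su_f=0$, whereas $u_fu_s=u_s$ by (\ref{idd}) and (\ref{identityl}); commutativity would then give $u_s=0$, i.e. $1_{A_f}=0$, a contradiction. Hence $d(s)=c(s)$ for every $s$, that is, $G=\bigsqcup_{e\in ob(G)}G_e$. Finally, for $s,t\in G_e$ one computes $u_su_t=\alpha(s,t)u_{st}$ and $u_tu_s=\alpha(t,s)u_{ts}$, so commutativity yields $\alpha(s,t)u_{st}=\alpha(t,s)u_{ts}$; comparing supports, if $st\neq ts$ then $\alpha(s,t)=0$, contradicting (0), so $st=ts$ and consequently $\alpha(s,t)=\alpha(t,s)$. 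This simultaneously establishes that each $G_e$ is abelian (finishing (ii)) and that $\alpha$ is symmetric (condition (v)).

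For part (b), assume (ii)--(v). By (ii) every morphism lies in some $G_e$, hence by (iii) every $\sigma_s$ is an identity map; moreover, if $s\in G_e$ and $t\in G_f$ with $e\neq f$, then neither $(s,t)$ nor $(t,s)$ is composable, so both $(a_su_s)(b_tu_t)$ and $(b_tu_t)(a_su_s)$ vanish. It remains to treat $s,t\in G_e$, where dropping the $\sigma$'s gives $(a_su_s)(b_tu_t)=a_sb_t\alpha(s,t)u_{st}$ and $(b_tu_t)(a_su_s)=b_ta_s\alpha(t,s)u_{ts}$; now $st=ts$ by (ii), $\alpha(s,t)=\alpha(t,s)$ by (v), and $a_sb_t\alpha(s,t)=b_ta_s\alpha(t,s)$ because $a_s,b_t,\alpha(s,t)$ all lie in the commutative ring $A_e$ by (iv). Thus the two products coincide, and by bilinearity $A\rtimes_\alpha^\sigma G$ is commutative.

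The one delicate point is the double use of hypothesis (0) in part (a): once to exclude the degenerate possibility that some coefficient ring $A_f$ is the zero ring (which would otherwise allow a morphism with $d(s)\neq c(s)$ to persist in $G$ even though $u_s=0$ in the algebra), and once to upgrade the identity $\alpha(s,t)u_{st}=\alpha(t,s)u_{ts}$ to the genuine relation $st=ts$ in $G_e$. Beyond this, the argument is a routine manipulation of the defining relations (\ref{idd})--(\ref{algebraa}) together with careful tracking of the supports of formal sums and of which coefficient ring each element belongs to, so I do not anticipate any serious obstacle.
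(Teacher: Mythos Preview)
Your proof is correct. The route differs from the paper's mainly in packaging: the paper derives (ii)--(iv) by invoking the preceding structural results (Proposition~\ref{centergeneral} for the disjoint-union claim and commutativity of each $A_e\rtimes_{\alpha_e}^{\sigma_e}G_e$, then Proposition~\ref{centermonoid}(i) for (iii), then Corollary~\ref{centertwisted} for (iv)), whereas you bypass these and argue directly from $(au_e)u_s=u_s(au_e)$ and $u_fu_s=u_su_f$. The computation $u_su_t=u_tu_s$ giving $st=ts$ and $\alpha(s,t)=\alpha(t,s)$ is identical in both. Your direct approach has the advantage of making the role of condition~(0) in the disjoint-union step completely explicit: you use $\alpha(e,e)=1_{A_e}\neq 0$ to rule out zero coefficient rings, a point the paper's appeal to Proposition~\ref{centergeneral} leaves implicit. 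Conversely, the paper's approach shows how the result fits into the general description of the center. For part~(b), the paper again cites Corollaries~\ref{centertwisted} and~\ref{centerskew}, while your bilinear verification is self-contained; both are fine.
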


\begin{proof}
(a) Suppose that conditions (0) and (i) hold.
By Proposition \ref{centergeneral}, we get that
$G$ is the direct sum of $G_e$, $e \in ob(G)$,
and that each $A_e \rtimes_{\alpha_e}^{\sigma_e} G_e$, $e \in ob(G)$,
is commutative. The latter and Proposition \ref{centermonoid}(i)
imply that (iii) holds.
Corollary \ref{centertwisted} now implies that (iv) holds.
For the rest of the proof we can suppose that $G$ is a monoid.
Take $s,t \in G$. By the commutativity of
$A \rtimes_{\alpha}^{\sigma} G$ we get that
$\alpha(s,t) u_{st} = u_s u_t = u_t u_s = \alpha(t,s) u_{ts}$
for all $s,t \in G$.
Since $\alpha$ is nonzero this implies that $st=ts$ and that
$\alpha(s,t)=\alpha(t,s)$ for all $s,t \in G$.
Therefore, $G$ is abelian and (v) holds.

Conversely, by Corollary \ref{centertwisted} and
Corollary \ref{centerskew} we get that conditions (ii)-(iv)
are sufficient for commutativity of
$A \rtimes_{\alpha}^{\sigma} G$.
\end{proof}

\begin{rem}\label{firstremark}
Proposition \ref{centergeneral}, Corollary \ref{centertwisted},
Corollary \ref{centerskew} and Proposition \ref{commutative}
generalize Proposition 3 and
Corollaries 1-4 in \cite{oin06} from groups to categories.
\end{rem}

\begin{rem}\label{secondremark}
Let $A \rtimes G$ be a category algebra
where all the rings $A_e$, $e \in ob(G)$, coincide with
a fixed ring $D$.
Then $A \rtimes G$ is the usual category algebra
$DG$ of $G$ over $D$. Let $H$ denote the disjoint
union of the monoids $G_e$, $e \in ob(G)$.
By Proposition \ref{centermonoid} and Proposition \ref{centergeneral}
the center of $DG$ is
the collection of $\sum_{s \in H} a_s u_s$, $a_s \in Z(D)$,
$s \in H$, in the induced category algebra $Z(D)H$ satisfying
$\sum_{s \in H \atop st=r} a_s =
\sum_{s \in H \atop ts=r} a_s$ for all $r,t \in G$.
Note that if $G$ is a groupoid, then the last condition
simplifies to $a_{rt^{-1}} = a_{t^{-1}r}$
for all $r,t \in G$ with $c(r)=c(t)$ and $d(r)=d(t)$.
This result specializes to two well known cases.
First of all, if $G$ is a group, then we retrieve
the usual description of the center of a group ring
(see e.g. \cite{pas77}). Secondly, if
$G$ is the groupoid with the $n$ first positive integers as objects and
as arrows all pairs $(i,j)$, $1 \leq i,j \leq n$,
equipped with the partial binary operation
defined by letting $(i,j)(k,l)$ be defined and equal to $(i,l)$
precisely when $j=k$, then $DG$ is the ring of square matrices
over $D$ of size $n$ and we retrieve the result that $Z(M_n(D))$
equals the $Z(D)1_n$ where $1_n$ is the unit $n \times n$ matrix.
\end{rem}

\begin{rem}\label{thirdremark}
Let $L/K$ be a finite separable (not necessarily normal) field
extension. Let $N$ denote a normal closure of $L/K$ and
let $Gal$ denote the Galois group of $N/K$.
Furthermore, let $F$ denote the direct sum of the conjugate fields
$L_i$, $i = 1, \ldots , n$; put $L_1 = L$.
If $1 \leq i,j \leq n$, then let $G_{ij}$ denote the set of
field isomorphisms from $L_j$ to $L_i$. If $s \in G_{ij}$,
then we indicate this by writing $d(s) = j$ and $c(s) = i$.
If we let $G$ be the union of the $G_{ij}$,
$1 \leq i,j \leq n$, then $G$ is a groupoid.
For each $s \in G$, let $\sigma_s = s$.
Suppose that $\alpha$ is a map
$G^{(2)} \rightarrow \bigsqcup_{i=1}^n L_i$
with $\alpha(s,t) \in L_{c(s)}$,
$(s,t) \in G^{(2)}$ satisfying
(\ref{identity}), (\ref{associative}) and (\ref{algebra})
for all $(s,t,r) \in G^{(3)}$ and all $a \in L_{d(t)}$.
The category crossed product
$F \rtimes_{\alpha}^{\sigma} G$
extends the construction usually defined by
Galois field extensions $L/K$.
By Proposition \ref{centergeneral},
the center of $F \rtimes_{\alpha}^{\sigma} G$ is
the collection of $\sum_{e \in ob(G)} a_e u_e$
with $a_e = s(a_f)$ for all $e,f \in ob(G)$
and all $s \in G$ with $c(s)=e$ and $d(s)=f$.
Therefore the center is a field isomorphic
to $L^{G_{1,1}}$ and we retrieve the first part
of Theorem 4 in \cite{lu05}.
\end{rem}

\section{The commutant of the coefficient ring}

\begin{prop}\label{commutant}
The commutant of $A$ in $A \rtimes_{\alpha}^{\sigma} G$
is the collection of $\sum_{s \in G} a_s u_s$ in
$A \rtimes_{\alpha}^{\sigma} G$ satisfying
$a_s = 0$, $s \in G$, $d(s) \neq c(s)$, and
$a_s \sigma_s(a) = a a_s$, $s \in G$, $d(s)=c(s)$,
$a \in A_{d(s)}$.
\end{prop}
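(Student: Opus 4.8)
The plan is to characterize the commutant of $A$ by testing commutativity of an arbitrary element $x := \sum_{s \in G} a_s u_s$ against the generators of $A$, namely the elements $b u_e$ for $e \in ob(G)$ and $b \in A_e$. First I would compute the two products $(b u_e) x$ and $x (b u_e)$ using the multiplication rule (\ref{multiplication}) together with the convention that products indexed by non-composable pairs vanish. Since $e$ is an identity morphism, $(b u_e)(a_s u_s)$ is nonzero only when $(e,s) \in G^{(2)}$, i.e. when $c(s) = e$, in which case (\ref{idd}) and (\ref{identityl}) collapse the product to $(b a_s) u_s$; similarly $(a_s u_s)(b u_e)$ is nonzero only when $d(s) = e$, and then (\ref{idd}) and (\ref{identityr}) give $(a_s \sigma_s(b)) u_s$.

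Next I would extract term-by-term conditions. Comparing coefficients of $u_s$, the identity $x (b u_e) = (b u_e) x$ for all $e \in ob(G)$ and all $b \in A_e$ forces, for each $s \in G$: the coefficient of $u_s$ coming from multiplication on the right by $1_{A_{d(s)}} u_{d(s)}$ equals $a_s$ (this just recovers $x$), while multiplication on the left by $1_{A_{c(s)}} u_{c(s)}$ again recovers $a_s$ — so far no constraint. The constraint appears when we pick $e = c(s)$ on one side and $e = d(s)$ on the other and use a general $b$. Concretely, for $s$ with $c(s) \neq d(s)$, choose $b = 1_{A_{c(s)}}$: then $(b u_{c(s)}) x$ has $u_s$-coefficient $a_s$, whereas $x(b u_{c(s)})$ has $u_s$-coefficient $0$ since $d(s) \neq c(s)$; hence $a_s = 0$. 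For $s$ with $c(s) = d(s)$ (so $A_{c(s)} = A_{d(s)}$), taking $e = c(s) = d(s)$ and an arbitrary $b \in A_{d(s)}$ yields $b a_s = a_s \sigma_s(b)$, which is exactly the stated relation $a_s \sigma_s(a) = a a_s$.

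For the converse inclusion I would verify that any $x$ satisfying the two listed conditions commutes with every element of $A = \oplus_{e \in ob(G)} A_e u_e$; by bilinearity it suffices to check commutation with each $b u_e$, and the computation above runs in reverse: the $u_s$-coefficients of $(b u_e)x$ and $x(b u_e)$ agree because nonzero contributions occur only for $s$ with $c(s) = e = d(s)$, where the relation $b a_s = a_s \sigma_s(b)$ applies, and for all other $s$ both coefficients vanish (using $a_s = 0$ when $d(s) \neq c(s)$). I do not anticipate a serious obstacle here; the only point requiring a little care is bookkeeping the domain/codomain side conditions so that one does not conflate the roles of $c(s)$ and $d(s)$ when $A_{c(s)}$ and $A_{d(s)}$ are genuinely different rings — keeping $b$ strictly inside the correct component $A_e$ throughout is what makes the two conditions fall out cleanly and separately.
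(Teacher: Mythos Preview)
Your proposal is correct and follows essentially the same approach as the paper: the paper also tests commutation of $x$ against $u_e$ (your choice $b=1_{A_{c(s)}}$) to force $a_s=0$ when $d(s)\neq c(s)$, and against $a u_e$ to obtain $a_s\sigma_s(a)=aa_s$ when $d(s)=c(s)$. Your write-up is more explicit about the converse direction and the domain/codomain bookkeeping, but the underlying argument is identical.
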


\begin{proof}
The first claim follows from the fact that the equality
$(\sum_{s \in G} a_s u_s)u_e = u_e(\sum_{s \in G} a_s u_s)$
holds for all $e \in ob(G)$.
The second claim follows from the fact that the equality
$(\sum_{s \in G} a_s u_s)au_e = au_e(\sum_{s \in G} a_s u_s)$
holds for all $e \in ob(G)$ and all $a \in A_e$.
\end{proof}

Recall that the annihilator
of an element $r$ in a commutative ring $R$
is the collection, denoted ${\rm ann}(r)$, of elements
$s$ in $R$ with the property that $rs = 0$.

\begin{cor}\label{commutativeA}
Suppose that $A$ is commutative. Then
the commutant of $A$ in $A \rtimes_{\alpha}^{\sigma} G$
is the collection of $\sum_{s \in G} a_s u_s$ in
$A \rtimes_{\alpha}^{\sigma} G$ satisfying
$a_s = 0$, $s \in G$, $d(s) \neq c(s)$, and
$\sigma_s(a)-a \in {\rm ann}(a_s)$, $s \in G$,
$d(s)=c(s)$, $a \in A_{d(s)}$.
In particular, $A$ is maximal commutative in
$A \rtimes_{\alpha}^{\sigma} G$ if and only if
there for all choices of $e \in ob(G)$,
$s \in G_e \setminus \{ e \}$, $a_s \in A_e$,
there is a nonzero $a \in A_e$ with the
property that $\sigma_s(a)-a \notin {\rm ann}(a_s)$.
\end{cor}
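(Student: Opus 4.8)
The plan is to derive Corollary \ref{commutativeA} directly from Proposition \ref{commutant}, treating the two parts in turn. For the first part, I would start with the description of the commutant of $A$ given in Proposition \ref{commutant}: an element $\sum_{s \in G} a_s u_s$ lies in the commutant precisely when $a_s = 0$ whenever $d(s) \neq c(s)$, and $a_s \sigma_s(a) = a a_s$ for all $s \in G$ with $d(s) = c(s)$ and all $a \in A_{d(s)}$. Under the hypothesis that $A$ is commutative, each $A_{d(s)}$ is commutative, so $a a_s = a_s a$, and the relation $a_s \sigma_s(a) = a a_s$ rearranges to $a_s(\sigma_s(a) - a) = 0$, which by definition of the annihilator is exactly the condition $\sigma_s(a) - a \in \mathrm{ann}(a_s)$. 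This rewriting is essentially a one-line computation, so the first part should be immediate.

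For the ``in particular'' statement, recall that $A$ is maximal commutative in $A \rtimes_{\alpha}^{\sigma} G$ means that the commutant of $A$ equals $A$ itself, i.e. $\oplus_{e \in ob(G)} A_e u_e$. By the first part, the commutant always contains $A$ (the elements supported on $ob(G)$, for which the defining condition holds vacuously since $\sigma_e = \mathrm{id}_{A_e}$ by (\ref{idd})). Hence maximal commutativity fails exactly when the commutant is strictly larger, i.e. when there exists an element $\sum_{s \in G} a_s u_s$ in the commutant with some $a_s \neq 0$ for an $s$ with $d(s) = c(s)$ but $s \notin ob(G)$; writing $e = d(s) = c(s)$, this $s$ lies in $G_e \setminus \{e\}$. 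I would then spell out that, since the defining conditions in Proposition \ref{commutant} are imposed coordinate-by-coordinate on the morphisms $s$, the existence of such an element is equivalent to the existence of a single $e \in ob(G)$, a single $s \in G_e \setminus \{e\}$, and a single nonzero $a_s \in A_e$ such that $\sigma_s(a) - a \in \mathrm{ann}(a_s)$ for \emph{all} $a \in A_e$. Negating this statement gives precisely the asserted criterion for maximal commutativity: for every choice of $e \in ob(G)$, $s \in G_e \setminus \{e\}$, and $a_s \in A_e$, there is a nonzero $a \in A_e$ with $\sigma_s(a) - a \notin \mathrm{ann}(a_s)$.

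The only point requiring a little care — and the step I would expect to be the main (minor) obstacle — is the logical bookkeeping in passing between ``the commutant is strictly bigger than $A$'' and the pointwise statement about a single triple $(e, s, a_s)$. One has to observe that the conditions in Proposition \ref{commutant} decouple completely across morphisms $s$, so that enlarging the commutant beyond $A$ is possible iff it is possible using an element supported on a single morphism $s \in G_e \setminus \{e\}$; and that for a fixed $a_s$, the condition ``$a_s u_s$ is in the commutant'' reads ``$\sigma_s(a) - a \in \mathrm{ann}(a_s)$ for all $a \in A_e$''. Once this is made explicit, the equivalence and its contrapositive are routine. No deeper structural input is needed beyond Proposition \ref{commutant} and the commutativity of the $A_e$.
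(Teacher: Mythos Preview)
Your proposal is correct and follows exactly the route the paper takes: the paper's proof is the single sentence ``This follows immediately from Proposition~\ref{commutant},'' and you have simply unpacked that sentence, rewriting $a_s\sigma_s(a)=aa_s$ as $a_s(\sigma_s(a)-a)=0$ via commutativity and then negating carefully for the ``in particular'' clause. Your observation that the commutant conditions decouple across morphisms, so that failure of maximal commutativity is witnessed by a single nonzero $a_s u_s$ with $s\in G_e\setminus\{e\}$, is the right way to justify the equivalence; note incidentally that your analysis implicitly (and correctly) restricts to nonzero $a_s$, which is how the paper's quantifier ``for all $a_s\in A_e$'' must be read for the statement to hold.
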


\begin{proof}
This follows immediately from Proposition \ref{commutant}.
\end{proof}

\begin{cor}\label{integraldomain}
Suppose that each $A_e$, $e \in ob(G)$, is an integral domain.
Then the commutant of $A$ in $A \rtimes_{\alpha}^{\sigma} G$
is the collection of $\sum_{s \in G} a_s u_s$ in
$A \rtimes_{\alpha}^{\sigma} G$ satisfying
$a_s = 0$ whenever $\sigma_s$ is not an identity map.
In particular, $A$ is maximal commutative in
$A \rtimes_{\alpha}^{\sigma} G$ if and only if
for all nonidentity $s \in G$, the map $\sigma_s$
is not an identity map.
\end{cor}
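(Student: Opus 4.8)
The plan is to derive both assertions directly from Proposition~\ref{commutant}, using nothing beyond the fact that an integral domain is, by definition, a nonzero commutative ring without zero divisors.

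First I would note that since each $A_e$, $e \in ob(G)$, is an integral domain it is in particular commutative, so the coefficient ring $A = \oplus_{e \in ob(G)} A_e u_e$ is commutative and the notion of maximal commutativity is meaningful. By Proposition~\ref{commutant}, an element $\sum_{s \in G} a_s u_s$ lies in the commutant of $A$ exactly when $a_s = 0$ for every $s \in G$ with $d(s) \neq c(s)$, and $a_s \sigma_s(a) = a a_s$ for every $s \in G$ with $d(s) = c(s)$ and every $a \in A_{d(s)}$. Fix such an $s$ and put $e := d(s) = c(s)$. Since $A_e$ is commutative, the last relation rearranges to $a_s\bigl(\sigma_s(a) - a\bigr) = 0$ for all $a \in A_e$, and since $A_e$ has no zero divisors this forces either $a_s = 0$ or $\sigma_s(a) = a$ for all $a \in A_e$, i.e. $\sigma_s = {\rm id}_{A_e}$; conversely, if $\sigma_s = {\rm id}_{A_e}$ the relation holds for every choice of $a_s$. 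Combining this with the case $d(s) \neq c(s)$ --- in which $\sigma_s$ maps between distinct summands of $A$, hence is not an identity map, and $a_s = 0$ is forced --- we conclude that the commutant of $A$ is precisely the set of $\sum_{s \in G} a_s u_s$ with $a_s = 0$ whenever $\sigma_s$ is not an identity map. This is the first assertion.

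For the remaining claim I would argue that $A$, identified with $\oplus_{e \in ob(G)} A_e u_e$, is maximal commutative if and only if it coincides with its commutant, and, by the description just obtained, this happens if and only if the only $s \in G$ with $\sigma_s$ an identity map are the identity morphisms $e \in ob(G)$. The implication ($\Leftarrow$) is immediate from that description; for ($\Rightarrow$), if some non-identity $s$ had $\sigma_s = {\rm id}_{A_e}$ (forcing $s \in G_e$ for some $e$), then $1_{A_e} u_s$ would be an element of the commutant not lying in $A$, where one uses that $1_{A_e} \neq 0$ because $A_e$ is nonzero. Since $\sigma_e = {\rm id}_{A_e}$ for every $e \in ob(G)$ by (\ref{idd}), this condition is the same as requiring that $\sigma_s$ be a non-identity map for every non-identity $s \in G$, which is exactly what the statement asserts. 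I do not expect a genuine obstacle: everything reduces to Proposition~\ref{commutant} and the definition of an integral domain, the only subtleties being the bookkeeping observation that ``$\sigma_s$ is an identity map'' already entails $d(s) = c(s)$, and the use of $1_{A_e} \neq 0$ to exhibit a witness in the maximal-commutativity argument.
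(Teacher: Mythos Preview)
Your proof is correct and follows essentially the same route as the paper, which simply says the corollary is immediate from Corollary~\ref{commutativeA}; you have merely inlined that intermediate step and argued directly from Proposition~\ref{commutant}, supplying the zero-divisor argument explicitly. The extra care you take with the case $d(s)\neq c(s)$ and with exhibiting $1_{A_e}u_s$ as a witness is more detail than the paper gives, but the underlying argument is the same.
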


\begin{proof}
This follows immediately from Corollary \ref{commutativeA}.
\end{proof}

\begin{prop}\label{alphasymmetric}
If $A$ is commutative,
$G$ a disjoint union of abelian monoids and
$\alpha$ is symmetric, then the commutant of $A$
in $A \rtimes_{\alpha}^{\sigma} G$ is the unique
maximal commutative subalgebra of $A \rtimes_{\alpha}^{\sigma} G$
containing $A$.
\end{prop}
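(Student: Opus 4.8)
The plan is to reduce everything to a single claim: that the commutant $C$ of $A$ in $A \rtimes_{\alpha}^{\sigma} G$ is itself commutative. First note that $A \subseteq C$, since $A$ is commutative, and that $C$ is an $A$-subalgebra (the commutant of any subset is a subring, and one checks directly, using commutativity of $A$, that $C$ is stable under the $A$-bimodule action). Granting commutativity of $C$, the proposition follows formally: if $B$ is any commutative subalgebra with $A \subseteq B$, then every element of $B$ commutes with every element of $A$, so $B \subseteq C$; hence $C$ contains every commutative subalgebra containing $A$ and, being itself one, is the unique maximal such subalgebra.

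So the content is the commutativity of $C$. I would start from the description in Proposition \ref{commutant}: an element of $C$ has the form $x = \sum_{s \in G} a_s u_s$ with $a_s = 0$ whenever $d(s) \neq c(s)$ and $a_s \sigma_s(a) = a a_s$ for all $s$ with $d(s)=c(s)$ and all $a \in A_{d(s)}$. Take $x = \sum_s a_s u_s$ and $y = \sum_t b_t u_t$ in $C$ and compare $xy = \sum_{(s,t)} (a_s u_s)(b_t u_t)$ with $yx = \sum_{(s,t)} (b_t u_t)(a_s u_s)$ ordered pair by ordered pair. The supports of $x$ and $y$ lie in $\bigcup_{e \in ob(G)} G_e$, and since $G$ is the disjoint union of the $G_e$, whenever $s$ and $t$ lie in different $G_e$ both $(a_s u_s)(b_t u_t)$ and $(b_t u_t)(a_s u_s)$ vanish, neither $(s,t)$ nor $(t,s)$ being composable. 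Thus only pairs $s,t$ lying in the same $G_e$ need to be treated.

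For $s,t \in G_e$ the commutant relation for $a_s$, applied with $a = b_t \in A_e$, gives $a_s \sigma_s(b_t) = b_t a_s = a_s b_t$, the last equality by commutativity of $A_e$; symmetrically $b_t \sigma_t(a_s) = a_s b_t$. Since $G_e$ is abelian, $st = ts$, and since $\alpha$ is symmetric and $d(s)=c(s)=d(t)=c(t)=e$, we have $\alpha(s,t) = \alpha(t,s)$. Hence, using the multiplication rule,
\[
(a_s u_s)(b_t u_t) = a_s b_t\, \alpha(s,t)\, u_{st} = a_s b_t\, \alpha(t,s)\, u_{ts} = (b_t u_t)(a_s u_s).
\]
Summing over all ordered pairs $(s,t)$ gives $xy = yx$, so $C$ is commutative and the proposition follows.

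The computation is short; the only points needing care are the bookkeeping forced by the category structure — verifying that no terms survive between distinct $G_e$, which is precisely what reduces the statement to the abelian-monoid situation — and making sure that the symmetry of $\alpha$ is invoked only in the range $d(s)=c(s)=d(t)=c(t)$ in which it was assumed. Neither is a genuine obstacle.
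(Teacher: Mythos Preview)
Your proof is correct and follows essentially the same route as the paper: reduce to showing the commutant $C$ is commutative, use Proposition~\ref{commutant} to restrict to pairs $s,t$ in a common $G_e$, then apply the commutant relation together with the commutativity of $A_e$, the abelianness of $G_e$, and the symmetry of $\alpha$ to conclude $(a_s u_s)(b_t u_t)=(b_t u_t)(a_s u_s)$. You are simply more explicit than the paper about the formal maximality/uniqueness argument and the vanishing of cross-terms between distinct $G_e$.
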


\begin{proof}
We need to show that the commutant of $A$
in $A \rtimes_{\alpha}^{\sigma} G$ is commutative.
By the first part of Proposition \ref{commutant},
we can assume that $G$ is an abelian monoid.
If we take $\sum_{s \in G} a_s u_s$ and
$\sum_{t \in G} b_t u_t$ in the commutant of
$A$ in $A \rtimes_{\alpha}^{\sigma} G$, then,
by the second part of Proposition \ref{commutant}
and the fact that $\alpha$ is symmetric,
we get that
$$\sum_{s \in G} a_s u_s \sum_{t \in G} b_t u_t
= \sum_{s,t \in G} a_s \sigma_s(b_t) \alpha(s,t) u_{st}
= \sum_{s,t \in G} a_s b_t \alpha(s,t) u_{st} =$$
$$= \sum_{s,t \in G} b_t a_s \alpha(t,s) u_{ts}
= \sum_{s,t \in G} b_t \sigma_t(a_s) \alpha(t,s) u_{st}
= \sum_{t \in G} b_t u_t   \sum_{s \in G} a_s u_s$$
\end{proof}

\begin{rem}\label{fourthremark}
Proposition \ref{commutant}, Corollary \ref{commutativeA},
Corollary \ref{integraldomain} and Proposition \ref{alphasymmetric}
together generalize Theorem 1, Corollaries 5-10 and Proposition 4
in \cite{oin06} from groups to categories.
\end{rem}

\begin{rem}
Let $A \rtimes G$ be a category algebra
where all the rings $A_e$, $e \in ob(G)$, coincide with
a fixed integral domain $D$.
Then $A \rtimes G$ is the usual category algebra
$DG$ of $G$ over $D$.
By Corollary \ref{integraldomain}, the commutant
of $D$ in $DG$ is $DG$ itself. In particular, $A$ is
maximal commutative in $DG$ if and only if
$G$ is the disjoint union of $|ob(G)|$ copies
of the trivial group.
\end{rem}

\begin{rem}
Let $L/K$ be a finite separable (not necessarily normal) field
extension. We use the same notation as in Remark \ref{thirdremark}.
By Corollary \ref{integraldomain}, the commutant of $F$
in $F \rtimes_{\alpha}^{\sigma} G$ is the collection
of $\sum_{i = 1}^n \sum_{s \in G_{ii}} a_s u_s$
satisfying $a_s = 0$ whenever $\sigma_s$ is not an identity map.
In particular, $F$ is maximal commutative in $F \rtimes_{\alpha}^{\sigma} G$
if all groups $G_{i,i}$, $i = 1, \ldots , n$, are nontrivial;
this of course happens in the case when $L/K$ is a Galois
field extension.
\end{rem}

\section{Commutativity and Ideals}

In this section, we investigate the connection between on the one hand
maximal commutativity of the coefficient ring
and on the other hand
non\-empty\-ness of intersections of the coefficient ring
by nonzero twosided ideals.
For the rest of the article, we assume that $ob(G)$
is finite. Recall (from Section 1) that this is equivalent to the fact
that $A \rtimes_{\alpha}^{\sigma} G$ has a multiplicative
identity; in that case the multiplicative identity is
$\sum_{e \in ob(G)} u_e$.

\begin{thm}\label{intersection}
If $A \rtimes_{\alpha}^{\sigma} G$ is a groupoid crossed product
with $A$ commutative and for every $s \in G$,
$\alpha(s,s^{-1})$ is not a zero divisor in $A_{c(s)}$, then every
intersection of a nonzero twosided ideal of $A \rtimes_{\alpha}^{\sigma} G$
with the commutant of $A$ in $A \rtimes_{\alpha}^{\sigma} G$ is
nonzero.
\end{thm}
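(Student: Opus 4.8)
The plan is to imitate the classical argument for group crossed products (Theorem 1) but to keep careful track of the grading by objects. Let $I$ be a nonzero twosided ideal of $A \rtimes_{\alpha}^{\sigma} G$ and pick $0 \neq x = \sum_{s \in G} a_s u_s \in I$. Among all nonzero elements of $I$, choose $x$ so that the number of nonzero coefficients $a_s$ is minimal; call this number the \emph{length} of $x$. The goal is to show that such a minimal-length $x$ must in fact lie in the commutant of $A$, which by Proposition \ref{commutant} amounts to showing that $a_s = 0$ whenever $d(s) \neq c(s)$ and that $a_s \sigma_s(a) = a a_s$ for all $a \in A_{d(s)}$ when $d(s) = c(s)$. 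Since $A$ is commutative, the second condition (via Corollary \ref{commutativeA}) reads $\sigma_s(a) - a \in \operatorname{ann}(a_s)$.

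First I would arrange that $x$ is supported in a single ``block'': fixing $e \in ob(G)$, the element $u_e x u_e$ again lies in $I$ and has length no larger than that of $x$; since $G$ is a groupoid, $\sum_{e} u_e x u_e = (\sum_e u_e) x (\sum_e u_e) = x$ as $\sum_e u_e$ is the identity, so at least one $u_e x u_f \neq 0$. Using that for a groupoid $u_e u_s u_f = u_s$ when $c(s) = e$, $d(s) = f$ and is $0$ otherwise, one reduces to the case where all morphisms $s$ in the support of $x$ share a fixed domain $f$ and codomain $e$; minimality then forces $u_e x u_f = x$. Next, for any $t \in G$ with $d(t) = e$, the element $u_t^{*} x$ — more precisely $\alpha(t^{-1},t)^{-1}$ times $u_{t^{-1}} x$, which makes sense because $\alpha(t^{-1},t) = \alpha(t^{-1},(t^{-1})^{-1})$ need not be a unit, so one instead multiplies by $u_{t^{-1}}$ and exploits that $\alpha(t,t^{-1})$, $\alpha(t^{-1},t)$ are non-zero-divisors to cancel — lets us translate the support, so without loss of generality one of the morphisms in the support is an identity object, i.e. $a_g \neq 0$ for some $g \in G_e$ with $g = e$ possible; more usefully, I can assume $c(s) = d(s) = e$ for every $s$ in the support after a further translation, since a nonzero element of minimal length supported off the ``diagonal'' can be multiplied by a suitable $u_{t}$ (with $t$ running over the groupoid morphisms out of the relevant objects) to produce a shorter or equally short element landing on $G_e$, and minimality plus the non-zero-divisor hypothesis on the $\alpha(s,s^{-1})$ prevents collapse to $0$.

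Once $x = \sum_{s \in G_e} a_s u_s$ with all $a_s \neq 0$ and $G_e$ a group, the standard finishing move applies: for each $a \in A_e$ form $y := a x - x a = \sum_{s} (a a_s - a_s \sigma_s(a)) u_s \in I$. Every coefficient of $y$ is an $A_e$-multiple of the corresponding coefficient of $x$, so $y$ has length strictly smaller than that of $x$ unless $y = 0$; by minimality $y = 0$, hence $a a_s = a_s \sigma_s(a)$ for all $a \in A_e$ and all $s$ in the support, which is exactly the condition of Proposition \ref{commutant} for $x$ to lie in the commutant of $A$. Finally one checks $x \neq 0$ is preserved throughout — here is where non-zero-divisibility of $\alpha(s,s^{-1})$ is used, to guarantee that each translation step $x \mapsto u_{t^{-1}} x$ (or $x \mapsto x u_t$) does not annihilate $x$, since the leading coefficients transform by multiplication by things of the form $\sigma_{\cdot}(a_s)\alpha(\cdot,\cdot)$ with the $\alpha$-factors ultimately controlled by the $\alpha(s,s^{-1})$. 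I expect the main obstacle to be precisely the bookkeeping in this translation step: in the groupoid setting one must be sure that multiplying by $u_{t^{-1}}$ keeps the element inside $I$ (it does, $I$ being a twosided ideal), that it does not increase the length (it does not, because composition with a fixed groupoid morphism is injective on the relevant hom-sets), and that it does not kill the element (this is the non-zero-divisor hypothesis), while simultaneously moving the support onto a single group $G_e$ so that the classical commutator trick becomes available.
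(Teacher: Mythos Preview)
Your overall strategy matches the paper's: pick a nonzero element of $I$ of minimal support, reduce to a single object, translate by $u_{t^{-1}}$ for some $t$ in the support, then use the commutator with $a\in A$ to drop the length and invoke minimality. The paper frames this contrapositively (assume $I\cap C=\{0\}$ and derive $I=\{0\}$), while you argue directly; that difference is cosmetic.

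There is, however, a genuine gap in your commutator step. You write that ``every coefficient of $y$ is an $A_e$-multiple of the corresponding coefficient of $x$, so $y$ has length strictly smaller than that of $x$ unless $y=0$''. Being a multiple does \emph{not} force the length to drop: a priori $y=ax-xa$ has support contained in that of $x$, possibly equal to it, and minimality alone gives you nothing. What actually makes the length drop is that \emph{the identity morphism $e$ lies in the support after the translation}, and its coefficient in $y$ is $a_e\bigl(a-\sigma_e(a)\bigr)=a_e\cdot 0=0$ by commutativity of $A_e$ and $\sigma_e=\mathrm{id}_{A_e}$. Hence $\mathrm{supp}(y)\subseteq\mathrm{supp}(x)\setminus\{e\}$, which is strictly smaller, and minimality then forces $y=0$. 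You never state that $e$ is in the support after translating, though it is: if $t$ is in the support and you form $x u_{t^{-1}}$, the term at $tt^{-1}=e$ has coefficient $a_t\,\alpha(t,t^{-1})\neq 0$ by the non-zero-divisor hypothesis. This is exactly where that hypothesis is used in the paper, and it is the only place it is needed; your discussion of inverting $\alpha(t^{-1},t)$ is a red herring and should be dropped.

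A smaller point: your reduction to a common domain via $u_e x u_f$ is fine but slightly more than the paper does. The paper only multiplies on the left by $u_e$ to fix $c(s)=e$; the right-multiplication by $u_{t^{-1}}$ then simultaneously fixes the domain and plants $e$ in the support. Either route works, but make sure at each stage you argue that the resulting element is still nonzero (so that, by minimality, its length has not dropped).
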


\begin{proof}
We show the contrapositive statement. Let $C$ denote the commutant
of $A$ in $A \rtimes_{\alpha}^{\sigma} G$ and suppose that $I$ is a
twosided ideal of $A \rtimes_{\alpha}^{\sigma} G$ with the property that $I
\cap C = \{ 0 \}$. We wish to show that $I = \{ 0 \}$. Take $x \in
I$. If $x \in C$, then by the assumption $x = 0$. Therefore we now
assume that $x  = \sum_{s \in G} a_s u_s  \in I$, $a_s \in
A_{c(s)}$, $s \in G$, and that $x$ is chosen so that $x \notin C$
with the set $S := \{ s \in G \mid a_s \neq 0 \}$ of least possible
cardinality $N$. Seeking a contradiction, suppose that $N$ is
positive. First note that there is $e \in ob(G)$ with $u_e x \in I
\setminus C$. In fact, if $u_e x \in C$ for all $e \in ob(G)$,
then $x = 1x = \sum_{e \in ob(G)} u_e x \in C$ which is a
contradiction. By minimality of $N$ we can assume that $c(s)=e$, $s
\in S$, for some fixed $e \in ob(G)$. Take $t \in S$ and consider
the element $x' := x u_{t^{-1}} \in I$. Since $\alpha(t,t^{-1})$
is not a zero divisor we get that $x' \neq 0$. Therefore, since
$I \cap C = \{ 0 \}$, we get that $x' \in I \setminus C$. Take $a =
\sum_{f \in ob(G)} b_f u_f \in A$. Then $I \ni x'' := ax' - x'a =
\sum_{s \in S} a_s(b_{d(s)} - \sigma_s(b_e)) u_s $. Since the
summand for $s = e$ vanishes, we get, by the assumption on $N$, that
$x'' = 0$. Since $a \in A$ was arbitrarily chosen, we get that $x'
\in C$ which is a contradiction. Therefore $N = 0$ and hence $S =
\emptyset$ which in turn implies that $x=0$. Since $x \in I$ was
arbitrarily chosen, we finally get that $I = \{ 0 \}$.
\end{proof}

\begin{cor}\label{intersectioncorollary}
If $A \rtimes_{\alpha}^{\sigma} G$ is a groupoid crossed product
with $A$ maximal commutative and for every $s \in G$,
$\alpha(s,s^{-1})$ is not a zero divisor in $A_{c(s)}$, then every
intersection of a nonzero twosided ideal of $A \rtimes_{\alpha}^{\sigma} G$
with $A$ is nonzero.
\end{cor}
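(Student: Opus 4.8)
The plan is to derive Corollary \ref{intersectioncorollary} directly from Theorem \ref{intersection}. The key observation is that if $A$ is maximal commutative in $A \rtimes_{\alpha}^{\sigma} G$, then by definition $A$ coincides with its own commutant $C$, i.e. $C = A$. Under this hypothesis the statement "every intersection of a nonzero twosided ideal with $C$ is nonzero" becomes literally "every intersection of a nonzero twosided ideal with $A$ is nonzero". So the argument is essentially a one-line reduction.

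More precisely, first I would note that the hypotheses of Corollary \ref{intersectioncorollary} -- namely that $A \rtimes_{\alpha}^{\sigma} G$ is a groupoid crossed product, that $A$ is commutative (which is part of the assumption that $A$ is maximal commutative, since maximal commutativity only makes sense for a commutative subring), and that each $\alpha(s,s^{-1})$ is not a zero divisor in $A_{c(s)}$ -- are exactly the hypotheses of Theorem \ref{intersection}. Hence Theorem \ref{intersection} applies and tells us that for every nonzero twosided ideal $I$ of $A \rtimes_{\alpha}^{\sigma} G$ we have $I \cap C \neq \{0\}$, where $C$ is the commutant of $A$. Then I would invoke the definition of maximal commutativity to conclude $C = A$, so that $I \cap A = I \cap C \neq \{0\}$, which is precisely the desired conclusion.

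There is essentially no obstacle here: the only thing to be slightly careful about is to make explicit that "maximal commutative" entails "commutative", so that Theorem \ref{intersection}'s commutativity hypothesis on $A$ is met, and that by Proposition \ref{commutant} (or simply by the definition recalled in the introduction) the commutant $C$ of $A$ equals $A$ itself under the maximal commutativity assumption. No induction, cardinality minimization, or computation is needed -- all the real work was already done in the proof of Theorem \ref{intersection}. Thus the proof will read simply: "Since $A$ is maximal commutative in $A \rtimes_{\alpha}^{\sigma} G$, it is commutative and coincides with its commutant $C$ in $A \rtimes_{\alpha}^{\sigma} G$. The claim now follows immediately from Theorem \ref{intersection}."
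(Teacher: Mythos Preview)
Your proposal is correct and follows exactly the paper's approach: the paper's proof is the single line ``This follows immediately from Theorem \ref{intersection},'' and your argument simply spells out why---maximal commutativity of $A$ forces $A$ to coincide with its commutant $C$, so the conclusion of Theorem \ref{intersection} specializes to the desired statement.
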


\begin{proof}
This follows immediately from Theorem \ref{intersection}.
\end{proof}

Now we examine conditions under which the opposite statement of
Corollary \ref{intersectioncorollary} is true. To this end, we recall some
notions from category theory that we need in the sequel (for the
details see e.g. \cite{mac}). Let $G$ be a category. A congruence
relation $R$ on $G$ is a collection of equivalence relations
$R_{a,b}$ on $hom(a,b)$, $a,b \in ob(G)$, chosen so that if $(s,s')
\in R_{a,b}$ and $(t,t') \in R_{b,c}$, then $(ts,t's') \in R_{a,c}$
for all $a,b,c \in ob(G)$. Given a congruence relation $R$ on $G$ we
can define the corresponding quotient category $G/R$ as the category
having as objects the objects of $G$ and as arrows the corresponding
equivalence classes of arrows from $G$. In that case there is a
corresponding functor $\mathcal{Q}_R : G \rightarrow G/R$ mapping objects with
the identity functor and mapping arrows to their respective
equivalence classes. We will often use the notation $[s] := \mathcal{Q}_R(s)$,
$s \in G$. Suppose that $H$ is another category and that $F : G
\rightarrow H$ is a functor. The kernel of $F$, denoted $ker(F)$, is
the congruence relation on $G$ defined by letting $(s,t) \in
ker(F)_{a,b}$, $a,b \in ob(G)$, whenever $s,t \in hom(a,b)$ and
$F(s)=F(t)$. In that case there is a unique functor $\mathcal{P}_F : G/ker(F)
\rightarrow H$ with the property that $\mathcal{P}_F \mathcal{Q}_{ker(F)} = F$.
Furthermore, if there is a congruence relation $R$ on $G$ contained
in $ker(F)$, then there is a unique functor $\mathcal{N} : G/R \rightarrow
G/ker(F)$ with the property that $\mathcal{N} \mathcal{Q}_R = \mathcal{Q}_{ker(F)}$. In that case
there is therefore always a factorization $F = \mathcal{P}_F \mathcal{N} \mathcal{Q}_R$; we will
refer to this factorization as the canonical one.

\begin{prop}\label{induced}
Let $\{ A,G,\sigma,\alpha \}$ and
$\{ A,H,\tau,\beta \}$ be crossed systems with $ob(G)=ob(H)$.
Suppose that there is a functor $F : G \rightarrow H$
satisfying the following three criteria:
(i) $F$ is the identity map on objects;
(ii) $\tau_{F(s)} = \sigma_s$, $s \in G$;
(iii) $\beta(F(s),F(t)) = \alpha(s,t)$, $(s,t) \in G^{(2)}$.
Then there is a unique $A$-algebra homomorphism
$A \rtimes_{\alpha}^{\sigma} G
\rightarrow A \rtimes_{\beta}^{\tau} H$, also denoted $F$,
satisfying $F(u_s) = u_{F(s)}$, $s \in G$.
\end{prop}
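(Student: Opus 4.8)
The plan is to define the candidate map on formal sums by $F\bigl(\sum_{s \in G} a_s u_s\bigr) := \sum_{s \in G} a_s u_{F(s)}$ and then verify, in turn, that it is well defined, additive, $A$-linear on both sides, multiplicative, and uniquely determined by the stated condition. Well-definedness is not automatic: although $F$ need not be injective on morphisms, the coefficient $a_s$ lies in $A_{c(s)} = A_{F(c(s))} = A_{c(F(s))}$ by criterion (i), so each term $a_s u_{F(s)}$ is a legitimate element of $A \rtimes_{\beta}^{\tau} H$; when several morphisms $s$ of $G$ have the same image in $H$, the corresponding coefficients simply add, and since all but finitely many $a_s$ vanish the resulting sum over $H$ is again finitely supported. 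Additivity is then immediate from the pointwise definition (\ref{addition}) of addition on both sides.

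Next I would check compatibility with the $A$-bimodule structures. For the left action, $a_e \cdot (b_s u_s) = (a_e b_s) u_s$ when $e = c(s)$ and $0$ otherwise; applying $F$ and using $c(F(s)) = F(c(s)) = c(s)$ (again criterion (i)) gives exactly $a_e \cdot (b_s u_{F(s)})$ in $A \rtimes_{\beta}^{\tau} H$. For the right action one uses criterion (ii): $(b_s u_s) c_f = (b_s \sigma_s(c_f)) u_s$ when $f = d(s)$, and $\sigma_s = \tau_{F(s)}$ together with $d(F(s)) = d(s)$ shows $F$ intertwines the two right actions. Thus $F$ is an $A$-bimodule map, hence in particular additive and $A$-linear.

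The multiplicativity check is the main point, though it is a routine computation once the bookkeeping is set up. It suffices to verify $F\bigl((a_s u_s)(b_t u_t)\bigr) = F(a_s u_s)\,F(b_t u_t)$ for single terms and extend bilinearly. If $(s,t) \notin G^{(2)}$, i.e. $d(s) \neq c(t)$, then by criterion (i) we also have $d(F(s)) = d(s) \neq c(t) = c(F(t))$, so both sides are $0$. If $(s,t) \in G^{(2)}$, then $(F(s),F(t)) \in H^{(2)}$ since $F$ is a functor and $F(s)F(t) = F(st)$; the left-hand side is $a_s \sigma_s(b_t)\alpha(s,t)\,u_{F(st)}$ by (\ref{multiplication}), while the right-hand side is $a_s \tau_{F(s)}(b_t)\beta(F(s),F(t))\,u_{F(s)F(t)}$, and these agree term-by-term using $\sigma_s = \tau_{F(s)}$ (criterion (ii)), $\alpha(s,t) = \beta(F(s),F(t))$ (criterion (iii)), and $F(st) = F(s)F(t)$. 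Since $F$ clearly sends $u_s \mapsto u_{F(s)}$ and, when $ob(G)$ is finite, $\sum_{e \in ob(G)} u_e \mapsto \sum_{e \in ob(G)} u_e$ (as $F$ is a bijection on objects by (i)), it respects the identity as well. For uniqueness, any $A$-algebra homomorphism $\Phi$ with $\Phi(u_s) = u_{F(s)}$ is forced on a general element by $\Phi\bigl(\sum_s a_s u_s\bigr) = \sum_s a_s \Phi(u_s) = \sum_s a_s u_{F(s)}$ using left $A$-linearity and additivity, so $\Phi = F$. The only genuine subtlety, and the one I would state carefully, is the collision of distinct morphisms under $F$ in the well-definedness step; everything else is a direct unwinding of the definitions.
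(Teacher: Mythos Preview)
Your proposal is correct and follows essentially the same approach as the paper: define the map by $A$-linear extension of $u_s \mapsto u_{F(s)}$, then verify additivity, preservation of the identity via (i), multiplicativity via (ii) and (iii), and deduce uniqueness from $A$-linearity. Your version is in fact more careful than the paper's, which omits the well-definedness discussion when $F$ is not injective on morphisms and does not explicitly treat the case $(s,t) \notin G^{(2)}$.
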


\begin{proof}
Take $x := \sum_{s \in G} a_s u_s$ in $A \rtimes_{\alpha}^{\sigma} G$
where $a_s \in A_{c(s)}$, $s \in G$.
By $A$-linearity we get that
$F(x) = \sum_{s \in G} a_s F(u_s)=
\sum_{s \in G} a_s u_{F(s)}$.
Therefore $F$ is unique.
It is clear that $F$ is additive.
By (i), $F$ respects the multiplicative identities.
Now we show that $F$ is multiplicative.
Take another $y := \sum_{s \in G} b_s u_s$ in
$A \rtimes_{\alpha}^{\sigma} G$ where $b_s \in A_{c(s)}$, $s \in G$.
Then, by (ii) and (iii), we get that
$$F(xy) =
F \left( \sum_{(s,t) \in G^{(2)}}  a_s \sigma_s(b_t) \alpha(s,t) u_{st} \right)
= \sum_{(s,t) \in G^{(2)}} a_s \sigma_s(b_t) \alpha(s,t) u_{F(st)} =$$
$$= \sum_{(s,t) \in G^{(2)}} a_s \tau_{F(s)}(b_t) \beta(F(s),F(t)) u_{F(s)F(t)}
= F(x) F(y)$$
\end{proof}

\begin{rem}\label{functor}
Suppose that $\{ A,G,\sigma,\alpha \}$ is a crossed system.
By abuse of notation, we let $A$ denote the category
with the rings $A_e$, $e \in ob(G)$, as objects and
ring homomorphisms $A_e \rightarrow A_f$, $e,f \in ob(G)$,
as morphisms. Define a map $\sigma : G \rightarrow A$
on objects by $\sigma(e) = A_e$, $e \in ob(G)$, and on
arrows by $\sigma(s) = \sigma_s$, $s \in G$.
By equation (\ref{algebra}) it is clear that
$\sigma$ is a functor if the following two conditions
are satisfied:
(i) for all $(s,t) \in G^{(2)}$, $\alpha(s,t)$
belongs to the center of $A_{c(s)}$;
(ii) for all $(s,t) \in G^{(2)}$, $\alpha(s,t)$
is not a zero divisor in $A_{c(s)}$.
\end{rem}

\begin{prop}\label{mainprop}
Let $A \rtimes_{\alpha}^{\sigma} G$ be a category crossed product
with $\sigma : G \rightarrow A$ a functor. Suppose that $R$ is a
congruence relation on $G$ with the property that the associated
quadruple $\{ A,G/R,\sigma([\cdot]),\alpha([\cdot],[\cdot]) \}$ is a
crossed system. If $I$ is the twosided ideal in $A \rtimes_{\alpha}^{\sigma}
G$ generated by an element $\sum_{s \in G} a_s u_s$, $a_s \in
A_{c(s)}$, $s \in G$, satisfying $a_s = 0$ if $s$ does not belong to
any of the classes $[e]$, $e \in ob(G)$, and $\sum_{s \in [e]} a_s =
0$, $e \in ob(G)$, then $A \cap I = \{ 0 \}$.
\end{prop}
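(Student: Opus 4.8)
The plan is to exploit the quotient functor $\mathcal{Q}_R : G \rightarrow G/R$ together with Proposition \ref{induced}. First I would verify that $\mathcal{Q}_R$ satisfies criteria (i)--(iii) of that proposition for the crossed systems $\{A,G,\sigma,\alpha\}$ and $\{A,G/R,\sigma([\cdot]),\alpha([\cdot],[\cdot])\}$. Criterion (i) holds because, by construction, $G/R$ has the same objects as $G$ and $\mathcal{Q}_R$ acts as the identity functor on them; criteria (ii) and (iii) hold essentially by definition, since $\sigma([\cdot])$ and $\alpha([\cdot],[\cdot])$ are the maps $[s] \mapsto \sigma_s$ and $([s],[t]) \mapsto \alpha(s,t)$ (well defined precisely because the associated quadruple is assumed to be a crossed system). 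Hence Proposition \ref{induced} yields an $A$-algebra homomorphism, still denoted $\mathcal{Q}_R$, from $A \rtimes_{\alpha}^{\sigma} G$ to $A \rtimes_{\alpha([\cdot],[\cdot])}^{\sigma([\cdot])} G/R$ satisfying $\mathcal{Q}_R(u_s) = u_{[s]}$ for all $s \in G$.

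Next I would show that the generator $z := \sum_{s \in G} a_s u_s$ lies in the kernel of $\mathcal{Q}_R$. Applying $\mathcal{Q}_R$ and using $A$-linearity gives $\mathcal{Q}_R(z) = \sum_{s \in G} a_s u_{[s]}$. By the first hypothesis on $z$, the only surviving summands are those with $s$ in the equivalence class $[e]$ of some $e \in ob(G)$; for such $s$ one has $c(s)=d(s)=e$ and $[s] = [e]$ is the identity morphism of $e$ in $G/R$, so $u_{[s]} = u_{[e]} = u_e$ in the coefficient ring. Grouping by object, $\mathcal{Q}_R(z) = \sum_{e \in ob(G)} \bigl( \sum_{s \in [e]} a_s \bigr) u_e$, which vanishes by the second hypothesis on $z$. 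Since the kernel of $\mathcal{Q}_R$ is a twosided ideal of $A \rtimes_{\alpha}^{\sigma} G$ and it contains $z$, it contains $I$.

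Finally I would note that $\mathcal{Q}_R$ restricts to the identity on the coefficient ring: it is $A$-linear and $\mathcal{Q}_R(u_e) = u_{[e]} = u_e$ for each $e \in ob(G)$, so $\mathcal{Q}_R$ is the identity map on $A = \oplus_{e \in ob(G)} A_e u_e$, in particular injective there. Therefore $A \cap ker(\mathcal{Q}_R) = \{0\}$, and since $I \subseteq ker(\mathcal{Q}_R)$ we conclude $A \cap I = \{0\}$. I expect the only genuine subtlety to be bookkeeping rather than a real obstacle: checking that $[s] = [e]$ forces $s \in G_e$ (so that the coefficients $a_s$, $s \in [e]$, all lie in the single ring $A_e$ and may be added), and keeping the identification of the coefficient ring of $A \rtimes_{\alpha([\cdot],[\cdot])}^{\sigma([\cdot])} G/R$ with $A$ consistent; the rest is a direct application of Proposition \ref{induced} and the fact that kernels of ring homomorphisms are twosided ideals.
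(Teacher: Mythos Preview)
Your proposal is correct and follows essentially the same argument as the paper: invoke Proposition~\ref{induced} for the quotient functor $\mathcal{Q}_R$, compute that the generator maps to $\sum_{e \in ob(G)} \bigl(\sum_{s \in [e]} a_s\bigr) u_{[e]} = 0$, conclude $I \subseteq \ker(\mathcal{Q}_R)$, and use $\mathcal{Q}_R|_A = \mathrm{id}_A$ to deduce $A \cap I = \{0\}$. Your write-up is slightly more explicit about verifying hypotheses (i)--(iii) and about the identification $u_{[e]} = u_e$, but there is no substantive difference.
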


\begin{proof}
By Proposition \ref{induced}, the functor $\mathcal{Q}_R$ induces an
$A$-algebra homomorphism $\mathcal{Q}_R : A \rtimes_{\alpha}^{\sigma} G
\rightarrow A \rtimes_{\alpha([\cdot],[\cdot])}^{\sigma([\cdot])}
G/R$. By definition of the $a_s$, $s \in G$, we get that
$$\mathcal{Q}_R \left( \sum_{s \in G} a_s u_s \right)
= \mathcal{Q}_R \left( \sum_{e \in ob(G)} \sum_{s \in [e]} a_s u_s \right)=$$
$$= \sum_{e \in ob(G)} \sum_{s \in [e]} a_s u_{[s]} =
\sum_{e \in ob(G)} \left( \sum_{s \in [e]} a_s \right) u_{[e]} = 0$$
This implies that $\mathcal{Q}_R(I) = \{ 0 \}$.
Since $\mathcal{Q}_R|_{A} = id_A$, we therefore get that
$I \cap A = (\mathcal{Q}_R|_A) (A \cap I) \subseteq \mathcal{Q}_R(I) = \{ 0 \}$.
\end{proof}

Let $G$ be a groupoid and suppose that we for each $e \in ob(G)$
are given a subgroup $N_e$ of $G_e$. We say that
$N = \cup_{e \in ob(G)} N_e$ is a normal subgroupoid of $G$
if $sN_{d(s)} = N_{c(s)}s$ for all $s \in G$. The normal
subgroupoid $N$ induces a congruence relation $\sim$ on $G$ defined
by letting $s \sim t$, $s,t \in G$, if there is $n$ in
$N_{d(t)}$ with $s = n t$. The corresponding quotient category is a
groupoid which is denoted $G/N$. For more details, see e.g.
\cite{hig}; note that our definition of normal subgroupoids
is more restrictive than the one used in \cite{hig}.

\begin{prop}\label{normal}
Let $A \rtimes_{\alpha}^{\sigma} G$ be a groupoid crossed product
such that for each $(s,t) \in G^{(2)}$,
$\alpha(s,t) \in Z(A_{c(s)})$ and
$\alpha(s,t)$ is not a zero divisor in $A_{c(s)}$.
Suppose that $N$ is a normal
subgroupoid of $G$ with the property that
$\sigma_n = id_{A_{c(n)}}$, $n \in
N$, and $\alpha(s,t)=1_{A_{c(s)}}$ if $s \in N$ or $t \in N$. If $I$ is the
twosided ideal in $A \rtimes_{\alpha}^{\sigma} G$ generated by an element
$\sum_{s \in G} a_s u_s$, $a_s \in A_{c(s)}$, $s \in G$, satisfying
$a_s = 0$ if $s$ does not belong to any of the sets $N_e$, $e \in
ob(G)$, and $\sum_{s \in N_e} a_s = 0$, $e \in ob(G)$, then $A \cap
I = \{ 0 \}$.
\end{prop}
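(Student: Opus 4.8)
The plan is to realize this statement as an instance of Proposition~\ref{mainprop}, taking for $R$ the congruence relation on $G$ induced by the normal subgroupoid $N$. Recall that $G/R$ is then a groupoid and that the $R$-classes of the identity morphisms are exactly $[e] = N_e$, $e \in ob(G)$. The only point requiring genuine work is to check that the associated quadruple $\{ A, G/R, \sigma([\cdot]), \alpha([\cdot],[\cdot]) \}$ is again a crossed system; once that is in hand, the conclusion follows formally.

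First I would note that, since by hypothesis each $\alpha(s,t)$, $(s,t) \in G^{(2)}$, is a central non-zero-divisor in $A_{c(s)}$, Remark~\ref{functor} applies, so $\sigma : G \rightarrow A$ is a functor; in particular $\sigma_{st} = \sigma_s \circ \sigma_t$ for every $(s,t) \in G^{(2)}$, and the standing hypothesis of Proposition~\ref{mainprop} that $\sigma : G \rightarrow A$ be a functor is met. Well-definedness of $\sigma([\cdot])$ is then immediate: if $s' \sim s$, write $s' = ns$ with $n \in N$; then $\sigma_{s'} = \sigma_n \circ \sigma_s = {\rm id} \circ \sigma_s = \sigma_s$, using functoriality and the hypothesis $\sigma_n = {\rm id}_{A_{c(n)}}$. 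For well-definedness of $\alpha([\cdot],[\cdot])$, suppose $s' \sim s$, $t' \sim t$, with both $(s,t)$ and $(s',t')$ composable, and write $s' = ns$, $t' = mt$ with $n,m \in N$. Applying the cocycle identity~(\ref{associativee}) to the triple $(n,s,t)$, the relation collapses --- since $\alpha(n,s) = \alpha(n,st) = 1$ and $\sigma_n = {\rm id}$ --- to $\alpha(s',t) = \alpha(s,t)$. Next, applying~(\ref{associativee}) to $(s',m,t)$ and using $\alpha(s',m) = \alpha(m,t) = 1$, together with normality of $N$ (to rewrite $s'm = n's'$ for some $n' \in N$) and one further application of the previous step, I obtain $\alpha(s',t') = \alpha(s',t)$. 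Combining, $\alpha(s',t') = \alpha(s,t)$, as required. Granting well-definedness of $\sigma([\cdot])$ and $\alpha([\cdot],[\cdot])$, the five crossed-system axioms~(\ref{idd})--(\ref{algebraa}) for $G/R$ follow at once by applying the functor $\mathcal{Q}_R$ to the corresponding identities in $G$, since $\mathcal{Q}_R$ is the identity on objects.

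It then remains only to match hypotheses. Because $[e] = N_e$ for every $e \in ob(G)$, the conditions imposed on the generator $\sum_{s \in G} a_s u_s$ --- namely that $a_s = 0$ unless $s \in N_e$ for some $e$, and that $\sum_{s \in N_e} a_s = 0$ for each $e$ --- are precisely the hypotheses of Proposition~\ref{mainprop} for this congruence relation $R$. That proposition then yields $A \cap I = \{ 0 \}$, completing the proof. I expect the two-step cocycle computation showing that $\alpha$ is constant on $R$-classes of composable pairs to be the only non-routine step; the remainder is bookkeeping about congruences and quotient categories.
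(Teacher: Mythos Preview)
Your proposal is correct and follows essentially the same strategy as the paper: invoke Remark~\ref{functor} to see that $\sigma$ is a functor, verify that $\sigma([\cdot])$ and $\alpha([\cdot],[\cdot])$ are well defined on $G/N$ via the cocycle identity~(\ref{associativee}) together with the hypotheses on $\alpha$ and $\sigma$ along $N$, and then apply Proposition~\ref{mainprop}. Your two-step cocycle computation (substituting the triples $(n,s,t)$ and $(s',m,t)$ and using normality) differs in detail from the paper's (which substitutes $(n,t,r)$ and $(s,t,n)$, then uses normality to convert right-$N$-multiplication into left-$N$-multiplication in the second slot), but the two routes are equivalent and equally short.
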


\begin{proof}
By Remark \ref{functor}, $\sigma$ is a functor $G \rightarrow A$ and
$\sim \ \subseteq ker(\sigma)$. Therefore, by the discussion
preceding Proposition \ref{induced}, there is a well defined
functor $\sigma[\cdot] : G/N \rightarrow A$. Now we show that the
induced map $\alpha([\cdot],[\cdot])$ is well defined. By equation
(\ref{associative}) with $s = n \in N_{c(t)}$ we get that $\alpha(n,t)
\alpha(nt,r) = \sigma_n(\alpha(t,r)) \alpha(n,tr)$. By the
assumptions on $\alpha$ and $\sigma$ we get that
$\alpha(nt,r)=\alpha(t,r)$.
Analogously, by equation (\ref{associative}) with $t = n \in N_{d(r)}$,
we get that $\alpha(s,t) = \alpha(s,tn)$.
Therefore, $\alpha([\cdot],[\cdot])$ is well defined.
The rest of the claim now follows
immediately from Proposition \ref{mainprop}.
\end{proof}

\begin{prop}\label{congruence}
Let $A \rtimes^{\sigma} G$ be a skew category algebra.
Suppose that $R$ is a congruence relation on $G$ contained
in $ker(\sigma)$.
If $I$ is the twosided ideal in $A \rtimes^{\sigma} G$
generated by an element $\sum_{s \in G} a_s u_s$,
$a_s \in A_{c(s)}$, $s \in G$, satisfying
$a_s = 0$ if $s$ does not belong to any of the classes
$[e]$, $e \in ob(G)$, and
$\sum_{s \in [e]} a_s = 0$, $e \in ob(G)$,
then $A \cap I = \{ 0 \}$.
\end{prop}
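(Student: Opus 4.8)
The plan is to reduce Proposition \ref{congruence} to the already-established Proposition \ref{mainprop}. Recall that for a skew category algebra $A \rtimes^{\sigma} G$ we have $\alpha(s,t) = 1_{A_{c(s)}}$ for all $(s,t) \in G^{(2)}$; in particular each $\alpha(s,t)$ lies in the center of $A_{c(s)}$ and, being a unit, is not a zero divisor. Hence, by Remark \ref{functor}, the map $\sigma : G \rightarrow A$ really is a functor. This is exactly the hypothesis needed to invoke Proposition \ref{mainprop}, so the first task is simply to record this observation.

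Next I would check that the hypothesis ``$R \subseteq ker(\sigma)$'' is precisely what is required for the quadruple $\{ A, G/R, \sigma([\cdot]), \alpha([\cdot],[\cdot]) \}$ to be a crossed system. By the discussion preceding Proposition \ref{induced}, since $R \subseteq ker(\sigma)$ there is a well-defined functor $\sigma[\cdot] : G/R \rightarrow A$ with $\sigma[\cdot] \circ \mathcal{Q}_R = \sigma$; concretely, $\sigma_{[s]}$ is unambiguously defined as $\sigma_s$. As for the cocycle, in the skew case $\alpha([\cdot],[\cdot])$ is identically $1$, so its well-definedness on composable pairs of $G/R$ is immediate, and conditions (\ref{identityr}), (\ref{identityl}), (\ref{associativee}), (\ref{algebraa}) for the quotient data reduce to the corresponding (trivial, for $\alpha \equiv 1$) identities already known to hold, together with (\ref{idd}) which carries over since $\sigma_{[e]} = \sigma_e = {\rm id}_{A_e}$. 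Thus $\{ A, G/R, \sigma([\cdot]), \alpha([\cdot],[\cdot]) \}$ is a crossed system.

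With both hypotheses of Proposition \ref{mainprop} verified, the conclusion follows directly: the element $\sum_{s \in G} a_s u_s$ satisfies $a_s = 0$ whenever $s$ lies in no class $[e]$, $e \in ob(G)$, and $\sum_{s \in [e]} a_s = 0$ for each $e \in ob(G)$, which are exactly the conditions in the statement of Proposition \ref{mainprop}; hence the two-sided ideal $I$ it generates satisfies $A \cap I = \{ 0 \}$.

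I do not anticipate a serious obstacle here, since the statement is essentially the specialization of Proposition \ref{mainprop} to $\alpha \equiv 1$. The only point that requires a moment's care is confirming that the triviality of $\alpha$ in the skew case makes both the ``$\alpha(s,t)$ central and not a zero divisor'' requirement of Remark \ref{functor} and the ``well-definedness of $\alpha([\cdot],[\cdot])$'' requirement automatic, so that no extra hypothesis beyond $R \subseteq ker(\sigma)$ is needed; once that is observed, the proof is a one-line appeal to Proposition \ref{mainprop}.
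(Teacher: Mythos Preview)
Your proposal is correct and follows essentially the same approach as the paper: invoke Remark \ref{functor} (using that $\alpha\equiv 1$ is central and a unit) together with the factorization discussion preceding Proposition \ref{induced} to obtain the well-defined functor $\sigma[\cdot]:G/R\to A$, and then apply Proposition \ref{mainprop}. The paper's proof is a terse two-line version of exactly this, leaving implicit your extra verification that the quotient data form a crossed system (which is indeed automatic in the skew case).
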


\begin{proof}
By Remark \ref{functor} and the discussion preceding Proposition
\ref{induced}, there is a well defined functor $\sigma[\cdot] : G/R
\rightarrow A$. The claim now follows immediately from Proposition
\ref{mainprop}.
\end{proof}

\begin{prop}\label{skewgroupoid}
Let $A \rtimes^{\sigma} G$ be a skew groupoid ring with all $A_e$,
$e \in ob(G)$, equal integral domains and each $G_e$, $e \in ob(G)$,
an abelian group. If every intersection of a nonzero twosided ideal of $A
\rtimes^{\sigma} G$ and $A$ is nonzero, then $A$ is maximal
commutative in $A \rtimes^{\sigma} G$.
\end{prop}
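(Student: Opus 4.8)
The plan is to prove the contrapositive: assuming $A$ is \emph{not} maximal commutative in $A \rtimes^{\sigma} G$, I will exhibit a nonzero twosided ideal $I$ with $I \cap A = \{0\}$. By Corollary \ref{integraldomain} (applied with each $A_e$ an integral domain), $A$ fails to be maximal commutative precisely when there is some nonidentity $s \in G$ with $d(s) = c(s) = e$ for which $\sigma_s = \mathrm{id}_{A_e}$. So fix such an $s$; it lies in the abelian group $G_e$, and $\sigma_s$ is the identity on $A_e$. The element I want to use to generate $I$ is $u_e - u_s$, which is a natural "difference of two arrows that $\sigma$ cannot distinguish."

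The first step is to produce a congruence relation $R$ on $G$ sitting inside $\ker(\sigma)$ such that $e$ and $s$ become identified. The cleanest choice is to let $N_e$ be the subgroup of $G_e$ generated by $s$ — or, to be safe, the cyclic subgroup $\langle s\rangle \subseteq \ker(\sigma_{\bullet})|_{G_e}$ — and then take $N = \langle s\rangle$ as a "subgroupoid" supported at the single object $e$ (with $N_f = \{f\}$ for $f \neq e$). One must check this $N$ is a normal subgroupoid in the sense defined just before Proposition \ref{normal}, i.e. $tN_{d(t)} = N_{c(t)}t$ for all $t \in G$: for $t \in G_e$ this holds because $G_e$ is abelian, and for $t$ with $d(t) \neq e$ or $c(t) \neq e$ both sides are forced by the object-support to be singletons, so the equality is automatic. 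This gives the quotient groupoid $G/N$ and the congruence $\sim$ with $\sim \ \subseteq \ker(\sigma)$ since $\sigma_{s^k} = (\sigma_s)^k = \mathrm{id}$.

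Now I would invoke Proposition \ref{congruence} directly. Set $a_e = 1_{A_e}$, $a_s = -1_{A_e}$, and $a_t = 0$ for all other $t$. Since $s \in [e]$ (the class of $e$ under $\sim$ is exactly $N_e s$-type\ldots more precisely $[e] = N_e = \langle s\rangle$, which contains $s$), the coefficient $a_t$ vanishes whenever $t$ lies in no class $[f]$, and $\sum_{t \in [e]} a_t = 1_{A_e} - 1_{A_e} = 0$ while $\sum_{t \in [f]} a_t = 0$ trivially for $f \neq e$. Thus the hypotheses of Proposition \ref{congruence} are met by $x := u_e - u_s = \sum_{t} a_t u_t$, and the ideal $I$ it generates satisfies $A \cap I = \{0\}$. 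It remains only to observe that $I \neq \{0\}$: indeed $u_e - u_s \in I$ and $u_e - u_s \neq 0$ because $e \neq s$ as morphisms of $G$ (distinct $u_t$ are linearly independent in the crossed product). This produces the required nonzero ideal meeting $A$ trivially, contradicting the hypothesis, and completes the contrapositive.

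The main obstacle I anticipate is the verification that the chosen $N$ (cyclic subgroup at a single object) genuinely satisfies the normal-subgroupoid condition $tN_{d(t)} = N_{c(t)}t$ for \emph{all} $t \in G$, not merely for $t \in G_e$ — one needs to be careful about morphisms into and out of $e$ from other objects, though as noted the object-support of $N$ forces triviality there. A secondary point requiring care is matching the notation of Proposition \ref{congruence}: confirming that the equivalence class $[e]$ is precisely $N_e = \langle s \rangle$ (so that "$s$ belongs to the class $[e]$" is literally true), which follows from the definition of $\sim$ via $s = s\cdot e$ with $s \in N_{d(e)} = N_e$. Once these bookkeeping items are settled, the rest is an immediate application of the machinery already built in Proposition \ref{mainprop} and Proposition \ref{congruence}.
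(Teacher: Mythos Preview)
Your overall strategy (contrapositive, locate a nonidentity $s \in G_e$ with $\sigma_s = \mathrm{id}_{A_e}$ via Corollary~\ref{integraldomain}, then generate an ideal by $u_e - u_s$ and annihilate it under a quotient map) coincides with the paper's. However, there is a genuine gap precisely at the step you yourself flagged as the main obstacle. Taking $N_e = \langle s \rangle$ and $N_f = \{f\}$ for all $f \neq e$ does \emph{not} give a normal subgroupoid whenever some $G_{f,e}$ with $f \neq e$ is nonempty. For $t \in G_{f,e}$ (so $d(t)=e$, $c(t)=f$) the required equality $t N_{d(t)} = N_{c(t)} t$ becomes $t N_e = \{f\}t = \{t\}$; since left multiplication by $t$ is injective in a groupoid, $|tN_e| = |N_e| \geq 2$, a contradiction. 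Equivalently, the relation you write down is not a congruence: if $s' \sim s''$ in $G_e$ with $s' \neq s''$ and $t \in G_{f,e}$, then $ts' \neq ts''$ in $G_{f,e}$, yet your relation on $G_{f,e}$ is trivial. Your assertion that for such $t$ ``both sides are forced by the object-support to be singletons'' is therefore false for morphisms with exactly one endpoint at $e$.

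The paper repairs this by transporting $N_e$ along the groupoid: for each $f$ in the same connected component as $e$ one sets $N_f$ equal to the conjugate of $N_e$ by any morphism between $e$ and $f$ (well-definedness of this conjugate uses that $G_e$ is abelian), and one puts $N_f = \{f\}$ only when $e$ and $f$ lie in different components. The resulting $N$ is a genuine normal subgroupoid, and since $\sigma$ is a functor in the skew case, each conjugate of $\sigma_s = \mathrm{id}$ is again an identity, so the associated congruence still lies in $\ker(\sigma)$. With this corrected $N$, your invocation of Proposition~\ref{congruence} (or Proposition~\ref{normal}) for the ideal generated by $u_e - u_s$ goes through exactly as you wrote.
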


\begin{proof}
We show the contrapositive statement. Suppose that $A$ is not
maximal commutative in $A \rtimes^{\sigma} G$. By the second part of
Corollary \ref{integraldomain}, there is $e \in ob(G)$ and a nonidentity
$s \in G_e$ such that $\sigma_s = id_{A_e}$. Let $N_e$ denote the
cyclic subgroup of $G_e$ generated by $s$. Note that since $G_e$ is
abelian, $N_e$ is a normal subgroup of $G_e$. For each $f \in
ob(G)$, define a subgroup $N_f$ of $G_f$ in the following way. If
$G_{e,f} \neq \emptyset$, then let $N_f = s N_e s^{-1}$, where $s$
is a morphism in $G_{e,f}$. If, on the other hand, $G_{e,f} =
\emptyset$, then let $N_f = \{ f \}$. Note that if $s_1 , s_2 \in
G_{e,f}$, then $s_2^{-1} s_1 \in G_e$ and hence $s_1 N_e s_1^{-1} =
s_2 s_2^{-1} s_1 N_e (s_2^{-1}s_1)^{-1} s_2^{-1} = s_2 N_e
s_2^{-1}$. Therefore, $N_f$ is well defined. Now put $N = \cup_{f
\in ob(G)} N_f$. It is clear that $N$ is a normal subgroupoid of $G$
and that $\sigma_n = id_{A_e}$, $n \in N$. Let $I$ be the nonzero
twosided ideal of $A \rtimes^{\sigma} G$ generated by $u_e - u_s$. By
Proposition \ref{normal} (or Proposition \ref{congruence}) it
follows that $A \cap I = \{ 0 \}$.
\end{proof}

\begin{rem}\label{fifthremark}
Proposition \ref{intersection}, Corollary \ref{intersectioncorollary}
and Propositions \ref{mainprop}-\ref{skewgroupoid}
together generalize Theorem 2, Corollary 11,
Theorem 3, Corollaries 12-15 and Theorem 4 in \cite{oin06} from groups to categories.
\end{rem}

By combining Theorem 2 and Proposition \ref{skewgroupoid},
we get the follwing result.

\begin{cor}
If $A \rtimes^{\sigma} G$ is a skew groupoid ring with all $A_e$,
$e \in ob(G)$, equal integral domains and each $G_e$, $e \in ob(G)$,
an abelian group, then $A$ is maximal commutative in $A \rtimes^{\sigma} G$
if and only if every intersection of a nonzero twosided ideal of $A
\rtimes^{\sigma} G$ and $A$ is nonzero.
\end{cor}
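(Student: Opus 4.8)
The plan is to observe that the final corollary is a pure combination of two results already proved in the excerpt, so the proof is simply a matter of quoting them in the right direction. First I would fix the hypotheses: $A \rtimes^{\sigma} G$ is a skew groupoid ring, every $A_e$ equals a fixed integral domain $D$, and every $G_e$ is abelian. These hypotheses are exactly those shared by Theorem~2 (i.e.\ Theorem~\ref{intersection} together with Corollary~\ref{intersectioncorollary}) and by Proposition~\ref{skewgroupoid}, so no extra verification of compatibility is needed.

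For the forward direction, assume $A$ is maximal commutative. I would like to apply Corollary~\ref{intersectioncorollary}, whose only remaining hypothesis is that $\alpha(s,s^{-1})$ be a non zero divisor in $A_{c(s)}$ for every $s \in G$. But in a skew groupoid ring $\alpha$ is identically $1$, and $1_{A_{c(s)}}$ is certainly not a zero divisor in the integral domain $A_{c(s)}=D$ (which is nonzero since it is a ring); so the hypothesis holds automatically. Hence Corollary~\ref{intersectioncorollary} yields that every intersection of a nonzero twosided ideal of $A \rtimes^{\sigma} G$ with $A$ is nonzero.

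For the reverse direction, assume that every such intersection is nonzero. This is precisely the hypothesis of Proposition~\ref{skewgroupoid} (which already requires the $A_e$ to be equal integral domains and the $G_e$ to be abelian), and its conclusion is that $A$ is maximal commutative in $A \rtimes^{\sigma} G$. So the two implications together give the stated equivalence.

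I do not expect any genuine obstacle here: the only point requiring a moment's thought is checking that the "non zero divisor" clause of Corollary~\ref{intersectioncorollary} is vacuous in the skew case over integral domains, and that both cited results are being invoked with hypotheses that literally match. The proof can therefore be written in two or three sentences, citing Corollary~\ref{intersectioncorollary} for one implication and Proposition~\ref{skewgroupoid} for the other, with the remark that $\alpha \equiv 1$ and $D$ an integral domain makes the zero divisor hypothesis trivially satisfied.
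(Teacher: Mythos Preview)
Your proposal is correct and matches the paper's own argument: the paper simply states that the corollary follows by combining Theorem~\ref{intersection} (via Corollary~\ref{intersectioncorollary}) with Proposition~\ref{skewgroupoid}, and you have spelled out exactly that, including the observation that $\alpha\equiv 1$ in the skew case trivially satisfies the non-zero-divisor hypothesis.
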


{\bf Acknowledgements:}
The first author was partially supported by The Swedish Research Council, The Crafoord Foundation, The Royal Physiographic Society in Lund, The Swedish Royal Academy of Sciences, The Swedish Foundation of International Cooperation in Research and Higher Education (STINT) and "LieGrits", a Marie Curie Research Training Network funded by the European Community as project MRTN-CT 2003-505078.

\end{document}